\newcommand\FF{{\mathbb F}}
\newcommand\NN{{\mathbb N}}
\newcommand\ZZ{{\mathbb Z}}
\newcommand\QQ{{\mathbb Q}}
\newcommand\KK{{\mathbb K}}
\newcommand\Qb{{\overline\QQ}}
\newcommand\CC{{\mathbb C}}
\newcommand\Rat{\operatorname{\textsf{Rat}}}
\newcommand\PSL{\operatorname{\textsf{PSL}}}
\newcommand\Pone{{\mathbb P^1}}
\theoremstyle{definition}
\newtheorem{plainnotation}[thm]{Notation}
\newenvironment{notation}{\pushQED{\defqed}\begin{plainnotation}}{\popQED\end{plainnotation}}
\newtheorem{alg}[thm]{Algorithm}
\newtheorem{example}[thm]{Example}
\def\sphere{{\mathbb S}}
\begin{document}

\author[L. Bartholdi]{Laurent Bartholdi}
\address{L.B.: Mathematisches Institut, Georg-August Universit\"at zu G\"ottingen}

\author[X. Buff]{Xavier Buff}
\address{X.B.: Institut de Math\'ematiques de Toulouse, Universit\'e Paul Sabatier, Toulouse}

\author[C. Bothmer]{Hans-Christian Graf von Bothmer}
\author[J. Kr\"oker]{Jakob Kr\"oker}
\address{H-.C.G.v.B., J.K.: Courant Research Centre ``Higher Order
  Structures'', Georg-August Universit\"at zu G\"ottingen}

\thanks{The authors acknowledge support from the Courant Research
  Centre ``Higher Order Structures'' at Georg-August Universit\"at zu
  G\"ottingen and the CIMI at Toulouse}

\date{25 October 2013}
\title{Algorithmic Construction of Hurwitz Maps}
\begin{abstract}
  We describe an algorithm that, given a $k$-tuple of permutations
  representing the monodromy of a rational map, constructs an
  arbitrarily precise floating-point complex approximation of that
  map.

  We then explain how it has been used to study a problem in dynamical
  systems raised by Cui.
\end{abstract}
\maketitle

\section{Introduction}
Let $\sphere$ be a topological oriented $2$-sphere.  The branched
coverings $\sphere\to \sphere$ considered in this article are all
orientation preserving.  Let $Q:=\{Q_i\}_{i\in I}$ be a finite subset
of $\sphere$ with $I=\ZZ/k\ZZ$. In~\cite{hurwitz:ramifiedsurfaces},
Hurwitz describes an elegant classification of branched coverings
$\sphere\to \sphere$ with critical values contained in $Q$ in terms
of \emph{admissible} $k$-tuples of permutations $(\sigma_i\in \sym
d)_{i\in I}$. A $k$-tuple is admissible if:
\begin{itemize}
\item the permutations $(\sigma_i)_{i\in I}$ generate a transitive
  subgroup of $\sym d$, 
\item $\sigma_{1}\cdot \sigma_{2}\cdots\sigma_{k}={\rm id}$ and 
\item the cycle lengths satisfy the condition
  \begin{equation}\label{eq:riemannhurwitz}
    \sum_{i\in I} \sum_{\substack{c\text{ cycle}\\\text{of }\sigma_i}}\bigl(\text{length}(c)-1\bigr)=2d-2.
  \end{equation}
\end{itemize}
See~\S\ref{ss:hurwitz} for more details regarding the classification. 

It is easy, using a computer algebra system such as
\textsc{Gap}~\cite{gap4:manual}, to enumerate all admissible
$k$-tuples of permutations; it is an altogether different problem to
\emph{construct} an analytic model of a covering associated to a given
admissible $k$-tuple of permutations.  The purpose of this note is to
describe such an algorithm and its implementation.

\subsection{Hurwitz's classification}\label{ss:hurwitz}
Two branched coverings $f:\sphere\to \sphere$ and $g:\sphere\to
\sphere$ are equivalent if there is an orientation preserving
homeomorphism $h:\sphere\to \sphere$ such that $g =
f\circ h$. Hurwitz's result is a classification of equivalence classes of
coverings in this sense.

Choose a basepoint $*\in \sphere \setminus Q$. For
each $i\in I$, choose a path $\gamma_i$ joining $*$ to $Q_i$ in $\sphere\setminus Q$, in such a way that 
\begin{itemize}
\item the paths $\gamma_i$ intersect only at $*$, 
\item the paths $(\gamma_{1},\ldots,\gamma_{k})$ are ordered cyclically counterclockwise around $*$. 
\end{itemize}
The fundamental group $G=\pi_1(\sphere\setminus Q,*)$ is generated by
paths $\hat\gamma_i$ that follow $\gamma_i$, wind once counterclockwise
around $Q_i$, and return to $*$ along $\gamma_i$. It has the presentation
\[G=\langle \hat\gamma_i, i\in I\mid \hat\gamma_{1}\cdot\hat\gamma_{2}\cdots \hat\gamma_{k}={\rm id}\rangle.\]

Let $f:(\sphere,C)\to (\sphere,Q)$ be a covering branched over $Q$. Number
$\{*_1,\dots,*_d\}$ the $f$-preimages of $*$. Then, for each $i\in I$ and each
$m\in\{1,\dots,d\}$, the path $\hat \gamma_i$ lifts to a path starting at $*_m$ and ending at $*_n$ for some $n=:\sigma_i(m)$. This defines a permutation $\sigma_i$ for each $i\in I$. 
Note that the $k$-tuple $(\sigma_i)_{i\in I}$ is admissible: 
\begin{itemize}
\item since $\sphere\setminus C$ is connected, the group
  $\langle\sigma_i\rangle$ is transitive on $\{1,\dots,d\}$;
\item since $\hat\gamma_{1}\cdot\hat\gamma_{2}\cdots
  \hat\gamma_{k}={\rm id}$, we have that $\sigma_{1}\cdot
  \sigma_{2}\cdots\sigma_{k}={\rm id}$;
\item computing the Euler characteristic of $\sphere\setminus C$ via
  the Riemann-Hurwitz formula yields~\eqref{eq:riemannhurwitz}.
\end{itemize}

Conversely, let $(\sigma_i)_{i\in I}$ be an admissible $k$-tuple of
permutations.  Define a branched covering as follows: start with $d$
disjoint copies of $\sphere$, cut open along the paths $\gamma_i$. If
$\sigma_i(m)=n$, glue the right boundary of $\gamma_i$ on $m$-th
sphere to the left boundary of $\gamma_i$ on the $n$-th sphere. This
defines a covering with critical values contained in $Q$. It is
connected because $\langle\sigma_i\rangle$ is transitive on
$\{1,\dots,d\}$. The Euler characteristic of the cover is $2$, because
of~\eqref{eq:riemannhurwitz} and the Riemann-Hurwitz formula; so it is
a sphere.

The $k$-tuple $(\sigma_i)_{i\in I}$ must be considered up to diagonal
conjugation by $\sym d$, which amounts to numbering the spheres
differently. The constructions above then define a bijection between
equivalence classes of branched coverings and equivalence classes of
appropriate $k$-tuples of permutations.

A coarser equivalence relation on coverings has also been considered,
but is not the main focus of this article: two coverings
$f,g:\sphere\to\sphere$ are \emph{Hurwitz equivalent} if there exist
homeomorphisms $h_0,h_1:\sphere\to\sphere$ with $f\circ h_1=h_0\circ
g$. Hurwitz classes of coverings may also be classified by $k$-tuples
of permutations; namely, by the orbits on appropriate $k$-tuples of
the symmetric group $\sym d$ (acting as above) and the pure braid
group on $k$ strings. The latter group's generators act by
conjugating, for any two consecutive points $Q_i,Q_{i+1}$ in $Q$, the
permutations $\sigma_i$ and $\sigma_{i+1}$ by
$\sigma_i\sigma_{i+1}$. This amounts to changing the ``spider''
$\bigcup_{i\in I}\gamma_i$ by twisting the legs $\gamma_i$ and
$\gamma_{i+1}$ around each other.

\subsection{Analytic models}
Assume now $Q\subset \Pone(\CC)$ and that $f:\sphere\setminus C\to
\Pone(\CC)\setminus Q$ is a covering map. Then, $f$ defines
holomorphic charts on $\sphere\setminus C$ and it is not difficult to
see that the points in $C$ are removable singularities: we denote by
$\sphere_f$ the corresponding Riemann surface.  By the Uniformization
Theorem, there is a conformal homeomorphism $\phi_f:\sphere_f\to
\Pone(\CC)$.  The map $F:=f\circ \phi_f^{-1}:\Pone(\CC)\to \Pone(\CC)$
is a holomorphic branched covering, i.e., a rational map.  Assume
$g=f\circ h:\sphere\to \Pone(\CC)$ for some homeomorphism
$h:\sphere\to \sphere$. Let $\phi_g:\sphere_g\to \Pone(\CC)$ be a
conformal homeomorphism and set $G:=g\circ \phi_g^{-1}:\Pone(\CC)\to
\Pone(\CC)$ be the corresponding rational map. Then, $H=\phi_f\circ
\phi_g^{-1}:\Pone(\CC)\to \Pone(\CC)$ is a M\"obius transformation and
$F=G\circ H$.

Therefore, up to precomposition by a M\"obius transformation, the
rational map $F$ only depends on the equivalence class of covering
$f:(\sphere, C)\to (\Pone(\CC),Q)$. We say that $f$ is an
\emph{analytic model}.

\subsection{Dynamics}\label{ss:dynamics}
Our algorithm is an important step in the more difficult problem of
determining an analytic model with given dynamics. We start by
recalling some definitions.  The \emph{post-critical set} of a
branched self-covering $f:\sphere\to \sphere$ with critical value set
$Q_f$ is
\[P_f:=\bigcup_{n\ge0} f^{\circ n}(Q_f).
\]
We are interested in the case where $P_f$ is finite and we consider
$f$ up to isotopy rel $P_f$; namely, we that that $f$ and $g$ are
\emph{combinatorially equivalent}, and write $f\sim g$, if there
exists a path of branched self-coverings from $f$ to $g$ whose
post-critical set moves smoothly.

The dynamical problem alluded to above asks to determine, given a
branched covering $f:\sphere\to\sphere$ with finite post-critical set,
whether there exists a rational map that is combinatorially equivalent
to $f$, and in that case to exhibit such a rational map.

A fundamental theorem of Thurston (see~\cite{douady-h:thurston} and
Theorem~\ref{thm:thurston} below) proves (except in few
well-understood, low-complexity cases) that such a branched covering
$f$ is combinatorially equivalent to at most one rational map, up to
conjugation by a M\"obius transformation; furthermore, if $\#Q=3$,
then it has precisely one holomorphic realization.

In case $f$ is a \emph{topological polynomial} (it has a fixed point
of maximal ramification), the dynamics of $f$ may be described by
combinatorial data called ``external rays'',
see~\cite{poirier:portraits}. An implementation, when $f$ has only two
critical values, is described in~\cite{hubbard-s:spider}, and is
called the ``spider algorithm''; see also~\cite{boyd-henriksen:medusa}
treating the general degree-$2$ case. In a forthcoming paper, the
first author will describe the implementation of the general case.

If $\#Q=3$, then we may assume $Q_1=\infty$, $Q_2=0$ and $Q_3=1$
within $\Pone(\CC)$. Furthermore, precomposing $f$ by an appropriate
M\"obius transformation, we may also assume that $Q\subset
f^{-1}(Q)$. In the polynomial case, Pilgrim linked
in~\cite{pilgrim:dessins} the ``dessin d'enfant'' (the full preimage
of the segment $[0,1]$) of $f$ with a dynamical invariant, its
``Hubbard tree''.

We describe in Section~\ref{sec:cui} a question by Cui in the theory
of holomorphic dynamical systems, and give an explicit holomorphic
realization of a topological map he constructed.

This will also be our running example in the text. With
$Q_1=\infty$, $Q_2=0$ and $Q_3=1$, the permutations representing the map are
\begin{alignat}{2}
  &\sigma_1&=(1,7,11,2)(3,8)\underline{(4,5)}(6,10)(9,12,13),\notag\\
  &\sigma_2 &=(1,3,12,4)(5,9)\underline{(6,7)}(10,13,11)(2,8),\label{eq:perm}\\
  &\sigma_3 &=(1,5,13,6)(7,10)\underline{(2,3)}(8,11,12)(4,9).\notag
\end{alignat}
Recall that the cycles of the above permutations correspond to
preimages of critical values. We seek a degree-$13$ rational map $f$
such that the \underline{underlined} cycle $(4,5)$ and its image under
$f$ are located at $\infty$, and similarly for the other two cycles
and images.

In this specific example, the search can be made more feasible as
follows. Setting all critical points as unknowns and eliminating is
out of the question. With a little faith that the symmetry between
$\infty,0,1$ translates to $f$, let $\rho(z)=1/(1-z)$ be the rotation
permuting $\infty,0,1$, and note that $\Pone(\CC)/\langle\rho\rangle$
is a sphere, branched at the two fixed points of $\rho$. If $f$
descends to a map $g$ on $\Pone(\CC)/\langle\rho\rangle$, then (after
change of variables) it has the form $g(z)=z(p(z)/q(z))^3$ for
degree-$4$ polynomials, such that $g(z)=1+\mathcal O((z-1)^4)$ at
$z=1$, and such that $1$ is the image of four other points with local
degrees $3,2,2,2$ respectively. We are grateful to Noam Elkies and
Curt McMullen for having pointed out to us the feasibility of this
approach.

Nevertheless, we will show that our algorithm is strong enough to
produce a solution even without exploiting the symmetry of the Hurwitz
data.

\subsection{Simple cases}
If $\#Q=2$, then there is a unique solution represented, up to diagonal conjugation, by the pair of permutations
\[\sigma_1=(1,2,\ldots,d)\quad\text{and}\quad \sigma_2=(d,\ldots,2,1).\]
If $Q_1=\infty$ and $Q_2=0$, an analytic model is $f(z)=z^d$.  

However, the case $\#Q=3$ seems already as complicated as
the general case, and has only been addressed in the literature for
small $d$. Such maps are often called ``dessins d'enfant'',
see~\cite{grothendieck:esquisse}; the corresponding combinatorial
objects for the modular surface $\mathfrak h/\PSL_2(\Z)$ are called
``Conway diagrams'',
see~\cite{atkin-sd:noncongruence}*{\S3.4}. Methods of constructing
them are addressed, \emph{inter alia},
in~\cites{couveignes-granboulan:dessins,couveignes:exemples,bowers-stephenson:dessins}.

In this section, we consider the case $d\le3$ which can completely be
solved. If $\#Q=2$, then as we said above we may choose
$Q=\{\infty,0\}$ and $f(z)=z^d$. If $d=\#Q=3$ then we may choose
$Q=\{\infty,0,1\}$. Without loss of generality, we may assume that all
points of $Q$ are branched values, since otherwise we are reduced to
the case $\#Q=2$. Up to permutation of the points in $Q$ and the
indices, the only possible triple of permutations is
\[\sigma_1=(1,2,3),\quad \sigma_2=(1,2)\quad\text{and}\quad \sigma_3=(2,3).\] 
To find an analytic model, we seek a rational map $f$ of degree $3$
such that
\[\infty\overset{3:1}\mapsto \infty,\quad 0\overset{2:1}\mapsto0\quad\text{and}\quad 1\overset{2:1}\mapsto1.\] 
This implies $f(z)=3z^2-2z^3$ as the only realization.

The next case we consider is $d=3$ and $\#Q=4$. Using M\"obius
transformations, we may normalise $Q$ to be $\{\infty,0,1,w\}$. Up to
conjugation in $\sym 3$ we may take the first permutations to be
$\sigma_1=\ldots=\sigma_i=(1,2)$. The condition that the permutations
generate a transitive group imply that one of them is not $(1,2)$. Up
to conjugation, we may assume that the first permutation which is not
$(1,2)$ is $\sigma_{i+1}=(2,3)$. Since $\sigma_1\cdot\sigma_2\cdot
\sigma_3\cdot\sigma_4={\rm id}$, this gives four possibilities,
namely, writing $\sigma=(\sigma_1,\sigma_2,\sigma_3,\sigma_4)$,
\begin{xalignat*}{2}
  \sigma&=\bigl((1,2),(1,2),(2,3),(2,3)\bigr), & \sigma&=\bigl((1,2),(2,3),(1,2),(1,3)\bigr),\\
  \sigma&=\bigl((1,2),(2,3),(1,3),(2,3)\bigr), & \sigma&=\bigl((1,2),(2,3),(2,3),(1,2)\bigr).
\end{xalignat*}
To find the corresponding $f$, assume without loss of generality that
$f$ maps $\infty\mapsto \infty$, $0\mapsto 0$, $1\mapsto 1$
and $v\mapsto w$. This forces the map $f$ to have the form
\[f_a(z)=z^2\frac{a(z-1)+1}{(a+2)(z-1)+1},\] for some parameter $a$
subject to $(a+1)(a-1)^3=w a(a+2)^3$; then $(a+1)(a-1)=v a(a+2)$. Since
$w\neq0,1$, the equation defining $a$ in terms of $w$ has four
distinct roots, leading to four candidate maps $f_a$. There is a
bijection between the maps $f$ and the triples of permutations above,
but no canonical one --- it will depend on the specific choice of
$\#Q$ generators $\hat\gamma_i$ of $\pi_1(\Pone(\CC)\setminus
Q,*)$.  Note also that these four solutions are part of a single
Hurwitz class.

\subsection{Reddite C\ae sare}
Various methods have already been considered for the computation of
branched coverings, at least under some restrictions on the
data. Note, first, that a head-on approach, solving numerically the
equations after having converted them to a Gr\"obner basis, works only
for the most simple examples, and in particular is completely
unrealistic for the degree-13 example described
in~\S\ref{ss:dynamics}.

In case $k=3$ and $Q=\{0,1,\infty\}$, the covering is called a
\emph{Belyi map}; if furthermore $\sigma_3$ is a $d$-cycle, then the
covering is called a \emph{Belyi polynomial}. The explicit
construction of Belyi maps has been addressed by numerous
authors. Couveignes and Granboulan describe
in~\cites{couveignes-granboulan:dessins} a method based on writing
Puiseux series for the solution, after having made initial guesses on
the positions of the roots; they obtain in this manner very
high-precision approximations of the co\"efficients of the map, which
allow the determination of their minimal field of definition (they
credit the idea to Oesterl\'e).

Matiyasevich conducted in~\cite{matiyasevich:chebyshev} some
experiments, and showed that Belyi polynomials can be efficiently
computed by an iterative process, increasing the polynomial degree and
adjusting the critical values by Newton's method. The idea is to
iteratively deform the polynomial $z^d$ so as to obtain arbitrary
critical values.

A much more efficient approach has been developed recently by Marshall
and Rohde~\cite{marshall-rohde:convergence}, and is based on the
zipping algorithm~\cite{kuehnau:interpolation}.  Zipping is much
faster, and lets one construct Belyi maps of very high degree. In
particular, Marshall and Rohde managed to describe all Belyi
polynomials of degree $\le14$. They have been able to reproduce the
computations in this article using their method.

\section{Overview of the algorithm}\label{ss:algo}
We are given a list $\sigma_1,\dots,\sigma_k$ of permutations in $\sym
d$ with product $\sigma_1\cdots\sigma_k=1$, and points
$Q_1,\dots,Q_k\in\Pone(\CC)$.

Let $\alpha_i = (\alpha_{i,1},\dots,\alpha_{i,\ell_i})$ be the cycle
lengths of $\sigma_i$; we have $\sum_j\alpha_{i,j}=d$ for all $i$, and
$\sum_{i,j}(\alpha_{i,j}-1)=2d-2$. In the first part of the algorithm,
we enumerate all rational maps with critical values
$Q_1,\dots,Q_k$ such that the multiplicities of the preimages of $Q_i$
are $\alpha_{i,1},\dots,\alpha_{i,\ell_i}$. In the second part, we
select the appropriate rational map among these candidates.

The approach in the first part of the algorithm seems to originate in
Malle~\cite{malle:primitive}; see
also~\cite{malle-matzat:realizierung}.

For the sake of describing its workflow more clearly, the actual
algorithm (described in the remainder of the text) has been slightly
simplified.

\begin{description}
\item[Normalization] Without loss of generality, we assume
  $Q_1=\infty$, $Q_2=0$ and $Q_3=1$. We approximate the other $Q_i$ by
  $\tilde Q_i\in\Pone(\Qb)$. The rational map we seek will leave
  $Q_1,Q_2,Q_3$ fixed. Using this normalization, if all $Q_i$ are
  algebraic then the co\"efficients of the map will also be algebraic.
\item[Finite field solution] We pick a prime $p$, such that the points
  $\tilde Q_i$ have distinct realizations $\overline
  Q_i\in\Pone(\FF_p)$.  We then list all degree-$d$ rational maps
  $\overline F$ over $\FF_p$ with poles and zeroes of multiplicities
  $\alpha_1$ and $\alpha_2$ respectively, and by brute force check for
  each $\overline F$ whether $\overline F-\overline Q_i$ has zeroes of
  multiplicities $\alpha_i$ for all $i \ge 3$.  Note that the rational
  map $\overline F$ is a solution to our original problem over
  $\FF_p$. (If there are no solutions, we restart with a different
  prime $p$).
\item[\boldmath $p$-adic solution] Write $\overline F=\overline
  W_2/\overline W_1$ with $\overline W_2$ monic of degree $d$, and
  $\overline W_1$ of degree less than $d$. (In fact, we later write
  the denominator as $\lambda W_1$ with $W_1$ monic. The present
  discussion uses a simplified notation.) For $i\ge 3$, let $\overline
  W_i=\overline W_2-\overline Q_i\overline W_1$ be the numerator of
  $\overline F-\overline Q_i$.  We compute high-precision $p$-adic
  approximations $\hat Q_i$ of the $\tilde Q_i$, and lift each
  $\overline W_i$ from $\FF_p$ to a high-precision polynomial $\hat W_i$
  over $\ZZ_p$, in such a manner that we have $\hat W_i=\hat
  W_2-\hat Q_i\hat W_1+O(p^N)$ for large $N$. This lifting can be done
  by Hensel's lemma, because by Corollary~\ref{cor:buff}, the Jacobian
  of the system $\{W_i=W_2-Q_i W_1\}$ is invertible at a solution for
  almost every prime $p$. (If $D\overline F$ happens not to be
  invertible, we restart with a different prime).
\item[Algebraic solution] Using the lattice-reduction algorithm
  LLL~\cite{lenstra-l-l:factoring}, we find polynomials $W_i$ over
  $\Qb$, with co\"efficients of small height (small degree and
  co\"efficients of minimal polynomial) that are close to $\hat W_i$
  obtained at the previous step. Using exact arithmetic over $\Qb$, we
  check that the solution $W_2/W_1$ is correct. (If not, we either
  compute a finer $p$-adic approximation, or higher-degree algebraic
  number approximations, or we restart altogether with a larger
  prime).
\item[Complex solution] For each co\"efficient $c_{i,j}\in\Qb$ of
  $W_i$, given by its minimal polynomial over $\Q$, we compute (to
  high, user-specified precision) all the roots $\tilde c_{i,j,s}$ of
  its minimal polynomial, as floating-point complex numbers. Not all
  choices of $\tilde c_{i,j,s}$ are compatible: there may exist some
  extra constraints between one co\"efficient and another (such as,
  for example, that they are complex conjugates of each other). We
  determine these extra constraints as follows: we choose small,
  random integers $m,n$, compute the minimal polynomial of $m
  c_{i,j}+n c_{i',j'}$, and compute (again to high precision) its
  roots $\tilde d_{i,j,i',j',t}$. We then pair together those roots
  $(\tilde c_{i,j,s},\tilde c_{i',j',s'})$ for which $\tilde
  d_{i,j,i',j',t} \approx m\tilde c_{i,j,s}+n\tilde c_{i',j',s'}$ for
  some $t$. By considering enough of these pairs we can stitch
  together a collection of compatible co\"efficient approximations
  $\tilde c_{i,j,s}$ embracing all $i,j$.
 
  We call $\widetilde C$ the collection of all co\"efficients $\tilde
  c_{i,j,s}$, and note that the rational map is determined by its
  zeroes, its poles, and the normalization condition that $Q_1=1$ is
  fixed. Since $\{0,\infty\}\subset Q$, these zeroes and poles are
  determined by $\widetilde C$.
\end{description}

The second step of the algorithm checks, by path lifting, that the
monodromy around $Q_i$ is correct. For each of the Galois conjugate
solutions $(\widetilde C,\widetilde f)$ obtained in the first step, we
do the following:

\begin{description}
\item[Triangulate] We are given a floating-point approximation
  $\widetilde Q$ of $Q$. We compute a triangulation $\mathscr Q$ of
  $\Pone(\CC)$ whose vertex set contains $\widetilde Q$, and a
  triangulation $\mathscr C$ of $\Pone(\CC)$ whose vertex set
  contains $\widetilde C$. For efficiency reasons, we use
  \emph{Delaunay triangulations}, see~\S\ref{ss:mono}. We compute the
  dual triangulation $\mathscr Q^\perp$; it has one vertex per face of
  $\mathscr Q$, and edges transverse to those of $\mathscr Q$. We fix
  a vertex $*\in\mathscr Q^\perp$ as our basepoint.
\item[Lift the triangulation] Let $W$ denote the vertices of $\mathscr
  Q^\perp$. For each $w\in W$, we number arbitrarily $w_1,\dots,w_d$
  the $\widetilde f$-preimages of $w$.

  For each edge $\varepsilon\in\mathscr Q^\perp$, going from $w'$ to
  $w''$, we compute a permutation $\varsigma_\varepsilon\in\sym d$
  such that the $\widetilde f$-lift of $\varepsilon$ starting at
  $w'_i$ ends at $w''_{\varsigma_\varepsilon(i)}$. There are two
  strategies for this, one is by subdividing appropriately the path
  $\varepsilon$ and playing ``connect-the-dots'', the other uses more
  efficiently the triangulation $\mathscr C$.
\item[Read permutations] For each critical value $Q_i\in Q$, let
  $\varepsilon(1),\dots,\varepsilon(n)$ be the sequences of edges
  traversed by a path in $\mathscr Q^\perp$ that starts and ends in
  the basepoint $*$, and surrounds once counterclockwise the point
  $Q_i$ and no other vertex of $Q$. Compute the permutation
  $\sigma'_i=\varsigma_{\varepsilon(1)}\cdots\varsigma_{\varepsilon(n)}$.
\item[Check] The data $(C,f)$ are a valid solution to the Hurwitz
  problem if and only if there exists a permutation $\tau\in\sym d$
  such that $\sigma'_i=(\sigma_i)^\tau$.
\end{description}

\subsection{Implementation}
The fourth-named author has implemented the first part of the
algorithm, mainly in C, and the first-named author has implemented the
second part of the algorithm, mainly in
\textsc{GAP}~\cite{gap4:manual}. By far the most time-consuming part
of the procedure is the search for a solution over a finite
field. Example~\ref{ex:search} required approximately 15 minutes on a
desktop, 30-SPECint2006 computer.  The code is maintained by the
fourth-named author, and is available at
\[\texttt{https://github.com/jakobkroeker/HMAC}\]

\section{The space of rational maps}
We show, in this section, that (as soon as the prime $p$ is
sufficiently large) we may lift every $\FF_p$-solution to $\ZZ_p$.
This follows from the well known fact that the Hurwitz spaces are
smooth. We could not find the precise statement we need in the
literature, so we give a complete proof.

Let $d\geq 2$ be an integer and denote by $\Rat_d$ the space of
rational maps of degree $d$, which may be identified with a Zariski
open subset of $\mathbb P^{2d+1}(\CC)$.

Let $k\geq 3$ be an integer and let $F:\sphere\to \sphere$ be a
ramified covering branched over $Q=\{Q_1,\ldots,Q_k\}$. Note that,
according to the Riemann-Hurwitz Formula, $C:=F^{-1}(Q)$ contains
exactly $(k-2)d+2$ points.  We write $C=\bigcup_i C_i$ with
$C_i:=F^{-1}\{Q_i\}=\{C_{i,1},\ldots,C_{i,\ell_i}\}$, and for each
$j\in \{1,\ldots, \ell_i\}$ we let $\alpha_{i,j}$ be the local degree
of $F$ at $C_{i,j}$.

Let $\mathfrak Q$ be the smooth quasiprojective variety of injective
maps $\mathfrak q:Q\to \Pone(\CC)$. For $\mathfrak q\in
\mathfrak Q$, we use the notation $q_i:=\mathfrak q(Q_i)$. Similarly,
let $\mathfrak C$ be the smooth quasiprojective variety of injective
maps $\mathfrak c:C\to \Pone(\CC)$. For $\mathfrak c\in
\mathfrak C$, we use the notation $c_{i,j}:=\mathfrak c(C_{i,j})$.
The quasiprojective variety $\mathfrak Y:=\mathfrak C\times \mathfrak
Q$ is smooth. We shall prove that the subvariety
\[\mathfrak X:=\bigl\{(\mathfrak c,\mathfrak q)\in \mathfrak Y ~\mid~(\exists f\in \Rat_d)~(\forall i,j)~f(c_{i,j}) = q_i\text{ and } {\rm deg}_{c_{i,j}} f = \alpha_{i,j}\bigr\}\]
is also smooth, and regularly parametrised:

\begin{prop}\label{prop:buff}
  The variety $\mathfrak X$ is smooth of dimension $k+3$, locally
  regularly parametrised by $(q_1,\dots,q_k,c_{1,1},c_{2,1},c_{3,1})$.
\end{prop}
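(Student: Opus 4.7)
The plan is to verify that the differential of the projection $\pi : \mathfrak X \to \CC^{k+3}$, $(\mathfrak c, \mathfrak q)\mapsto(q_1,\ldots,q_k,c_{1,1},c_{2,1},c_{3,1})$, is a linear isomorphism at every point of $\mathfrak X$; combined with a standard codimension bound (the defining conditions impose at most $kd$ equations on an ambient variety of dimension $(2d+1)+|C|+k=kd+k+3$, forcing $\dim_p\mathfrak X\geq k+3$), this will establish smoothness of dimension $k+3$ and that $\pi$ is a local regular parametrisation. First I note that a point $(\mathfrak c,\mathfrak q)\in\mathfrak X$ determines its witnessing $f\in\Rat_d$ uniquely: the zero divisors of $f-q_1$ and $f-q_2$ are prescribed by $\mathfrak c$, determining $f$ up to a single multiplicative scalar that is pinned down by $f(c_{3,1})=q_3$. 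So any tangent vector to $\mathfrak X$ lifts to a unique companion $\dot f\in T_f\Rat_d=H^0(\Pone,f^*T\Pone)=H^0(\Pone,\mathcal O(2d))$, of dimension $2d+1$.

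Next I linearise the ramification conditions. Writing $f(c_{i,j}+w)-q_i=a_{i,j}w^{\alpha_{i,j}}+O(w^{\alpha_{i,j}+1})$ with $a_{i,j}\neq 0$ and substituting $(f+\epsilon\dot f,\,c_{i,j}+\epsilon\dot c_{i,j},\,q_i+\epsilon\dot q_i)$, first-order preservation of a zero of order $\alpha_{i,j}$ at the deformed point translates into the following constraints on the Taylor coefficients of $\dot f$ at $c_{i,j}$ (stated for $\alpha_{i,j}\geq 2$; for $\alpha_{i,j}=1$ one has the single combined equation $\dot f(c_{i,j})+a_{i,j}\dot c_{i,j}=\dot q_i$):
\begin{align*}
\dot f(c_{i,j})&=\dot q_i,\\
\dot f^{(s)}(c_{i,j})&=0\qquad(1\le s\le\alpha_{i,j}-2),\\
\tfrac{1}{(\alpha_{i,j}-1)!}\dot f^{(\alpha_{i,j}-1)}(c_{i,j})&=-\alpha_{i,j}\,a_{i,j}\,\dot c_{i,j}.
\end{align*}
Given the prescribed data $(\dot q_1,\ldots,\dot q_k,\dot c_{1,1},\dot c_{2,1},\dot c_{3,1})$, these equations specify the full $\alpha_{i,1}$-jet of $\dot f$ at each $c_{i,1}$ ($i=1,2,3$) and the $(\alpha_{i,j}-1)$-jet of $\dot f$ at every other $c_{i,j}$ (the top-order coefficient then recovering the unknown $\dot c_{i,j}$). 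The total number of prescribed Taylor coefficients of $\dot f$ comes to
\[
\sum_{i=1,2,3}\alpha_{i,1}+\sum_{\text{other }(i,j)}(\alpha_{i,j}-1)=kd-(|C|-3)=2d+1,
\]
matching $\dim H^0(\Pone,\mathcal O(2d))$ exactly.

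The hard part will be establishing independence of these $2d+1$ linear conditions on $\dot f$. This reduces to a vanishing argument: any section of $\mathcal O(2d)$ annihilating all of them would vanish to order $\ge\alpha_{i,1}$ at each $c_{i,1}$ ($i=1,2,3$) and to order $\ge\alpha_{i,j}-1$ at each remaining $c_{i,j}$, hence to total order $2d+1>2d=\deg\mathcal O(2d)$, forcing it to be identically zero. Consequently the linear map from $\dot f$ to its prescribed Taylor data is an isomorphism of $(2d+1)$-dimensional vector spaces; $\dot f$ is uniquely determined by the parametrising data, the remaining $\dot c_{i,j}$ are then recovered from the top-order equations, and $d\pi$ is thereby shown to be a bijection at every point of $\mathfrak X$, completing the proof.
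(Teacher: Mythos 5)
Your proposal is correct in substance, but it takes a genuinely different route from the paper's. The paper first quotients by the $\PSL_2(\CC)\times\PSL_2(\CC)$ action to reduce to the normalized slice $\mathfrak X_0$, encodes membership in $\mathfrak X_0$ by the explicit polynomial system $F_i=W_i+\lambda q_iW_1-W_2$ in the coefficients of the monic polynomials $W_i$ and an auxiliary scalar $\lambda$, and proves in Lemma~\ref{lemma:implicit} that the relevant partial Jacobian is an isomorphism by converting a kernel element $\dot f$ into the vector field $\eta=-\dot f/f'$ on the source: freezing the critical values makes $\eta$ holomorphic at the critical points and equal to $\dot c_{i,j}$ there, and a holomorphic vector field on $\Pone(\CC)$ vanishing at the three normalized points $\infty,0,1$ is identically zero; the Implicit Function Theorem then yields the parametrisation. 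You instead stay upstairs with $\dot f\in T_f\Rat_d\cong H^0\bigl(\Pone(\CC),\mathcal O(2d)\bigr)$, linearize the ramification conditions as jet conditions at the $c_{i,j}$, and kill the kernel by the degree count $\sum_{i\le 3}\alpha_{i,1}+\sum_{\text{other}}(\alpha_{i,j}-1)=2d+1>2d$, replacing the IFT by a Krull-type codimension bound on the incidence variety to get the lower bound on dimension. The two vanishing arguments are transports of one another under $f$ (your extra order of vanishing at $c_{1,1},c_{2,1},c_{3,1}$ is exactly the paper's vanishing of $\eta$ at the three normalized points), but your version is coordinate-free, avoids the auxiliary $\lambda$ and the group-action reduction, and delivers the stated parametrisation by $(q_1,\dots,q_k,c_{1,1},c_{2,1},c_{3,1})$ directly; the paper's version has the practical payoff that the explicit system $\mathcal F$ whose Jacobian it inverts is precisely the one reused in Corollary~\ref{cor:buff} and in the Hensel lifting of Algorithm~\ref{aLift}. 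Two small points you should make explicit: your jet equations are written in an affine chart, so at points of $\mathfrak X$ where some $q_i$ or $c_{i,j}$ equals $\infty$ you must change charts (or invoke equivariance under the two M\"obius actions), since the proposition concerns all of $\mathfrak X$; and the ``unique companion $\dot f$'' of a tangent vector exists because the witnessing $f$ depends regularly --- not merely uniquely --- on $(\mathfrak c,\mathfrak q)$, i.e.\ the incidence variety carrying your linearization maps isomorphically onto $\mathfrak X$, which is what justifies lifting tangent vectors.
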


Observe that, for $(\mathfrak c,\mathfrak q)\in \mathfrak X$, there is
a unique rational map $f\in \Rat_d$ such that $f(c_{i,j}) = q_i$ and
${\rm deg}_{c_{i,j}} f = \alpha_{i,j}$ for all $(i,j)$. Indeed,
knowing a rational map above three points completely determines the
rational map (it is even enough to know the full preimage of two
points plus one preimage of a third point).

Note that the group of M\"obius transformations acts on $\mathfrak C$
and $\mathfrak Q$ by postcomposition:
\[(M,N)\cdot (\mathfrak c,\mathfrak q) := (M\circ \mathfrak c, N\circ
\mathfrak q).
\]
The quotient space may be identified with $\mathfrak Y_0:=\mathfrak
C_0\times \mathfrak Q_0$ with
\[\mathfrak C_0:=\bigl\{\mathfrak c\in \mathfrak C~\mid ~ c_{1,1}=\infty, ~c_{2,1}=0\text{ and } c_{3,1} = 1\bigr\}\]
and
\[\mathfrak Q_0:=\bigl\{\mathfrak q\in \mathfrak Q~\mid ~ q_1=\infty, ~q_2=0\text{ and } q_3 = 1\bigr\}.\]
The projection $\mathfrak Y\to \mathfrak
Y/(\PSL_2(\CC)\times\PSL_2(\CC))\cong \mathfrak Y_0$ is a submersion.

The action preserves $\mathfrak X$ as indicated on the following
commutative diagram:
\[\diagram
C\ar@{^{(}->}[r]^{\mathfrak c}\dto_F & \Pone(\CC)\dto^f \rto^M & \Pone(\CC)\dto^{N\circ f\circ M^{-1}} \\
Q\ar@{^{(}->}[r]^{\mathfrak q} & \Pone(\CC) \rto^N & \Pone(\CC).
\enddiagram
\]
It is therefore enough to show that $\mathfrak X_0:=\mathfrak X\cap
\mathfrak Y_0$ is a smooth subvariety of $\mathfrak Y_0$ locally
regularly parametrised by $(q_4,\dots,q_k)$.

We first write equations for $\mathfrak X_0$. 
To each $(\mathfrak c,\mathfrak q,\lambda)\in \mathfrak C_0\times \mathfrak Q_0\times \CC^*$, we associate a collection of monic polynomials 
$(W_i)_{i\in \{1,\ldots,k\}}$ defined by
\[W_1(z):=\prod_{j=2}^{\ell_1} (z-c_{1,j})^{\alpha_{1,j}}
\quad\text{and for }i\geq 2\quad W_i(z):=\prod_{j=1}^{\ell_i} (z-c_{i,j})^{\alpha_{i,j}}\]
and a collection of rational maps $(f_i)_{i\in \{2,\ldots,k\}}$ defined by
\[f_i:=\frac{W_i}{\lambda W_1}+q_i.
\]
Note that these are degree-$d$ rational maps with poles of order
$\alpha_{1,j}$ at $c_{1,j}$. In addition, $f_i$ maps $c_{i,j}$ to
$q_i$ with local degree $\alpha_{i,j}$. It follows that $(\mathfrak
c,\mathfrak q)\in \mathfrak X_0$ if and only if there is a $\lambda\in
\C^*$ such that $f_i=f_2$ for all $i\in \{3,\ldots, k\}$, that is,
$F_i=0$ with
\begin{equation}\label{eq:buff}
  F_i:=W_i+\lambda q_i W_1-W_2.
\end{equation}
In that case, we use the notation
\[f_{(\mathfrak c,\mathfrak q,\lambda)}:=f_2=f_3=\cdots=f_k.\]

In other words, consider the map
\[\mathcal F:=(F_3,\ldots,F_k):\mathfrak Y_0\times \CC\to (\CC[z]_{\deg<d})^{k-2}.\]
Then, $(\mathfrak c,\mathfrak q)\in \mathfrak X_0$ if and only if
there is a $\lambda \in \C^*$ such that $\mathcal F(\mathfrak
c,\mathfrak q,\lambda)=0$.

According to the following Lemma and the Implicit Function Theorem,
the subvariety of $\mathfrak Y_0\times \C$ defined by the equation
$\mathcal F=0$ is smooth of dimension $k-3$, locally regularly
parametrised by $(q_4,\dots,q_k)$. It follows that its projection to
the $\mathfrak Y_0$ component, namely $\mathfrak X_0$, is also smooth of
dimension $k-3$, locally regularly parametrised by $(q_4,\dots,q_k)$.

\begin{lem}\label{lemma:implicit}
  If $\mathcal F(\mathfrak c,\mathfrak q,\lambda)=0$, then the
  derivative $D_{(\mathfrak c,\mathfrak q,\lambda)}\mathcal F$
  restricts to an isomorphism $T_\mathfrak c \mathfrak C_0\times
  \{0\}\times T_\lambda \C\to T_0(\CC[z]_{\deg<d})^{k-2}$.
\end{lem}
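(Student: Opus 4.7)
The plan is to establish injectivity, which suffices because both source and target have dimension $(k-2)d$. Indeed, $|C|=(k-2)d+2$ by Riemann--Hurwitz, so $\dim\mathfrak C_0 = (k-2)d-1$, giving a source of dimension $(k-2)d$; and the target is $k-2$ copies of $\CC[z]_{\deg<d}$, each of dimension $d$.

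First I would introduce the logarithmic derivatives
\[
u_i(z) := -\sum_j \frac{\alpha_{i,j}\,\delta c_{i,j}}{z - c_{i,j}},
\]
where the sum omits $j=1$ for $i\in\{1,2,3\}$ by the normalization $\delta c_{1,1}=\delta c_{2,1}=\delta c_{3,1}=0$. Then $\delta W_i = u_i W_i$ and
\[
\delta F_i = u_i W_i + q_i\,\delta\lambda\,W_1 + \lambda q_i u_1 W_1 - u_2 W_2.
\]
Assuming $\delta F_i = 0$ for $i\ge 3$, I would substitute $W_i = W_2 - \lambda q_i W_1$ (from $F_i = 0$) and divide by $\lambda W_1$ to obtain the key identity
\[
(u_i - u_2)(f - q_i) = \frac{q_i B}{\lambda},\qquad B := \lambda(u_2 - u_1) - \delta\lambda,
\]
valid for every $i\ge 3$, where $f = W_2/(\lambda W_1)$. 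The essential feature is that $B$ is independent of $i$.

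The heart of the argument is then a zero/pole count for $B$ on $\Pone$. At each $c_{i,j}$ with $i\ge 3$, the LHS vanishes to order at least $\alpha_{i,j}-1$ (a simple pole of $u_i - u_2$ meeting an order-$\alpha_{i,j}$ zero of $f - q_i$), so $B$ does too. A crucial extra zero arises at $c_{3,1}=1$: since $\delta c_{3,1}=0$, the function $u_3$ is analytic there, so the LHS vanishes to order $\alpha_{3,1}$ rather than $\alpha_{3,1}-1$, forcing the same for $B$. Summing and invoking Riemann--Hurwitz,
\[
\text{zeros of }B \;\ge\; \Big(\sum_{i\ge 3}(d - \ell_i)\Big) + 1 \;=\; (\ell_1 + \ell_2 - 2) + 1 \;=\; \ell_1 + \ell_2 - 1,
\]
whereas $B = \lambda(u_2 - u_1) - \delta\lambda$ has only simple poles, located at the $c_{1,n}$ and $c_{2,n}$ for $n\ge 2$, totaling at most $\ell_1 + \ell_2 - 2$. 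Since a nonzero rational function on $\Pone$ has matching zero and pole counts, $B\equiv 0$. Isolating this extra zero and setting up the count cleanly is the main technical obstacle.

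From $B = 0$ the proof finishes quickly: $\lambda(u_2 - u_1) = \delta\lambda$ equates a rational function with simple poles to a constant, so all residues must vanish, giving $\delta c_{1,n} = \delta c_{2,n} = 0$ for $n\ge 2$, hence $u_1 = u_2 = 0$ and $\delta\lambda = 0$. Substituting back yields $u_i(f - q_i) = 0$ and therefore $u_i = 0$, so $\delta c_{i,j} = 0$ for all $i\ge 3$. The kernel is trivial, proving the claim.
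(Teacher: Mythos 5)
Your argument is correct, but it reaches the conclusion by a genuinely different route than the paper's. You linearize the equations \eqref{eq:buff} directly: writing $\delta W_i=u_iW_i$ with $u_i(z)=-\sum_j\alpha_{i,j}\,\delta c_{i,j}/(z-c_{i,j})$ (the $j=1$ terms for $i\le 3$ absent by the normalization), you compress the kernel condition into the single rational function $B=\lambda(u_2-u_1)-\delta\lambda$ via the identity $(u_i-u_2)(f-q_i)=q_iB/\lambda$ for every $i\ge3$ (where indeed $q_i\neq 0,\infty$), and you kill $B$ by a divisor count on $\Pone(\CC)$: zeros of order at least $\alpha_{i,j}-1$ at each $c_{i,j}$ with $i\ge3$, one extra order at $c_{3,1}=1$ because $\delta c_{3,1}=0$, against at most $\ell_1+\ell_2-2$ simple poles at the $c_{1,j},c_{2,j}$ with $j\ge2$ and no pole at infinity; Riemann--Hurwitz converts this into $\ell_1+\ell_2-1>\ell_1+\ell_2-2$, so $B\equiv0$, after which the kernel collapses ($\delta\lambda=0$, then $u_1=u_2=0$, then $u_i=0$ for all $i$), and injectivity plus the dimension count $(k-2)d$ finishes the proof, exactly as in the paper. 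The paper argues instead geometrically: a kernel vector with $\dot{\mathfrak q}=0$ yields a common tangent vector $\dot f$ to $\Rat_d$ and the meromorphic vector field $\eta=-\dot f/f'$; the local analysis of perturbations at a critical point whose critical value is frozen shows $\eta$ is holomorphic at every $c_{i,j}$ and equals $\dot c_{i,j}$ there, and since $\eta$ vanishes at the three normalized points $\infty,0,1$ it vanishes identically. The two proofs are tightly linked --- your $B$ is precisely $\lambda\,\dot f/f$, so both in effect show $\dot f=0$, and both spend the normalization three times (for you: one missing pole of $u_1$, one missing pole of $u_2$, one extra zero at $1$; for the paper: the three zeros of a holomorphic vector field). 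What your version buys is a self-contained, purely algebraic computation that bypasses the discussion of the tangent space to $\Rat_d$ and makes sense essentially verbatim over other algebraically closed fields as long as the $\alpha_{i,j}$ are invertible (compare Corollary~\ref{cor:buff}, which the paper instead obtains by reduction from characteristic zero); what the paper's version buys is conceptual clarity, the multiplicities $\alpha_{i,j}$ being absorbed once and for all into the local statement about critical points with frozen critical values rather than into an explicit zero/pole bookkeeping.
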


We postpone the proof of the lemma to Section~\ref{sec:prooflemma} and
mention immediately a corollary that we shall use later. For
$\mathfrak q\in \mathfrak Q$, let $\mathcal F_{\mathfrak q}:\mathfrak
C_0\times \CC^*\to (\CC[z]_{\deg<d})^{k-2}$ be defined by
\[\mathcal F_{\mathfrak q}(\mathfrak c,\lambda):=\mathcal F(\mathfrak c,{\mathfrak q},\lambda).\]

\begin{cor}\label{cor:buff}
  Assume that $(\mathfrak c,\mathfrak q,\lambda)$ is defined over
  $\Qb$ with $\mathcal F(\mathfrak c,{\mathfrak q},\lambda)=0$.  Then,
  for almost every prime $p$, the derivative $D\mathcal F_{\mathfrak
    q}$ at $(\mathfrak c,\lambda)$ is invertible mod $p$.
\end{cor}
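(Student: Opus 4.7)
The plan is to reduce this corollary immediately to Lemma~\ref{lemma:implicit} and then invoke the standard ``generic good reduction'' principle for nonzero algebraic numbers.

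First I would unpack the two derivatives. By definition $\mathcal F_{\mathfrak q}(\mathfrak c,\lambda)=\mathcal F(\mathfrak c,\mathfrak q,\lambda)$, so its total derivative at $(\mathfrak c,\lambda)$ is obtained from $D_{(\mathfrak c,\mathfrak q,\lambda)}\mathcal F$ by freezing the $\mathfrak q$--coordinate, i.e.\ by restricting to the subspace $T_{\mathfrak c}\mathfrak C_0\times\{0\}\times T_\lambda\CC$. This is exactly the restriction appearing in Lemma~\ref{lemma:implicit}, which is asserted there to be a $\CC$--linear isomorphism onto $T_0(\CC[z]_{\deg<d})^{k-2}$. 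Hence over $\CC$ the matrix representing $D\mathcal F_{\mathfrak q}$ at $(\mathfrak c,\lambda)$ is square and has nonzero determinant $\Delta$.

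Next I would observe that everything is algebraic. The entries of this Jacobian matrix are polynomials in the coordinates of $\mathfrak c$, $\mathfrak q$ and $\lambda$; since by hypothesis these lie in $\overline{\QQ}$, we may choose a number field $K\subset\overline{\QQ}$ containing all matrix entries and in particular $\Delta\in K^{*}$. Choose a nonzero integer $N$ such that $N$ clears the denominators of every matrix entry; then for every rational prime $p\nmid N$ the matrix reduces to a well defined element of $\mathrm{Mat}(\mathcal O_K/\mathfrak p)$ for each prime $\mathfrak p\mid p$ of $\mathcal O_K$.

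Finally, the element $\Delta\in K^{*}$ has a principal fractional ideal $(\Delta)$ supported on finitely many primes of $\mathcal O_K$. Enlarging the bad set to include $N$ and all rational primes lying below primes in the support of $(\Delta)$, we obtain a finite set $S$ of primes of $\ZZ$ such that for every $p\notin S$ and every $\mathfrak p\mid p$ the reduction of $\Delta$ modulo $\mathfrak p$ is a unit, and the reduced Jacobian is therefore invertible. This is exactly the statement of the corollary, with ``almost every prime'' meaning ``all primes outside this finite $S$''.

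There is no real obstacle once the first paragraph is in place: the identification of $D\mathcal F_{\mathfrak q}$ with the restriction in Lemma~\ref{lemma:implicit} is immediate, and the reduction--mod--$p$ step is routine commutative algebra. The only care needed is bookkeeping the denominators in the matrix entries alongside the numerator of $\Delta$, so that both are controlled by the same finite set $S$.
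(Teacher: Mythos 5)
Your proof is correct and takes essentially the same route as the paper: identify $D\mathcal F_{\mathfrak q}$ at $(\mathfrak c,\lambda)$ with the restriction of $D\mathcal F$ to $T_{\mathfrak c}\mathfrak C_0\times\{0\}\times T_\lambda\CC$, invoke Lemma~\ref{lemma:implicit} for invertibility in characteristic zero, and then exclude the finitely many primes at which the reduction could degenerate. The paper's bookkeeping differs only cosmetically (it writes the inverse matrix as $a/N$ with algebraic-integer entries and excludes $p\mid N$, rather than tracking the support of the fractional ideal of the determinant), so nothing needs to be added.
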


\begin{proof}
  Since the point $(\mathfrak c,\mathfrak q,\lambda)$ is defined over
  $\Qb$, its co\"ordinates may be written using algebraic integers,
  and reduced mod $p$. For all except finitely many values of $p$, the
  resulting reduction gives a genuine point, namely where the
  reductions of $(\mathfrak c,\mathfrak q)$ are injective and the
  reduction of $f_{(\mathfrak c,\mathfrak q,\lambda)}$ has degree $d$.
  Since $D\mathcal F$ is invertible over $\Qb$, it may be written as
  $a/N$ for a matrix $a$ with algebraic integer entries and $N\in\NN$;
  then the reduction modulo $p$ of $D\mathcal F$ is invertible for all
  primes not dividing $N$.
\end{proof}

Before embarking in the proof of the Lemma, we first build up a
description of the tangent space of $\Rat_d$. 

\subsection{The tangent space to rational maps}
Consider a rational map $f\in \Rat_d$. A tangent vector to $f$ is
\[\dot f:=\frac{{\mathrm d}f_t}{{\mathrm d}t}\Big|_{t=0}\] 
for a holomorphic family of rational maps $(f_t)_{|t|<\epsilon}$ with
$f_0=f$.

For every $z\in\Pone(\CC)$, the vector $\dot f(z)$ is a tangent
vector in $\Pone(\CC)$ at $f(z)$; in other words, $\dot f$ is a
section of the pullback bundle $f^*\bigl(T\Pone(\CC)\bigr)$. It
can be pulled back to a vector field on $\Pone(\CC)$, as
\[\eta(z)=-(D_z f)^{-1}(\dot f(z)),\]
or, in co\"ordinates, $\eta(z)=-\dot f(z)/f'(z)$. Therefore, $\eta(z)$
is a meromorphic vector field on $\Pone(\CC)$, holomorphic away
from critical points of $f$, and with a pole of order at most $m$ at
critical points of multiplicity $m$.

Geometrically, $\eta(z)$ is the movement at time $t=0$ of the point
$z_t=f_t^{-1}(f(z))$. This point $f_t$ can be followed away from
critical points, by the Implicit Function Theorem.

%

We consider now local perturbations of $f$ at a critical point, i.e.\
we assume that the vector field $\dot f$ is given by a path $f_t =
\phi_t \circ f \circ \psi_t^{-1}$ with $\phi_t,\psi_t$ analytic
perturbations of the identity at the critical value and point $v,c$
respectively of $f$.

Let $c_t$ denote the critical point of $f_t$ and let $v_t$ denote its
critical value; then $c_t=\psi_t(c)$ and $v_t=\phi_t(v)$. Let $\dot c$
denote the motion vector of $c_t$, and let $\dot v$ denote the motion
vector of $v_t$. Then $\dot c = \dot\psi(c)$ and $\dot
v=\dot\phi(v)$. Now $\dot f= \dot\phi\circ f-D f \circ\dot\psi$,
because $\phi_0=\psi_0=\operatorname{id}$. Therefore,
\[\eta + f^*\dot\phi = \dot\psi.\]
At $v$, the vector field $\dot\phi$ takes value $\dot v$, the vector
field $\eta+f^*\dot\phi$ is holomorphic at $c$, and its constant term
is $\dot c$. If $\dot v=0$, then $f^*\dot \phi$ is holomorphic near $c$ 
and vanishes at $c$. 

Therefore, whenever we have a family of rational maps $(f_t)$ for
which we can follow a critical point $c_t$ and its associated critical
value $v_t$ with $\dot v=0$, the vector field $\eta$ is holomorphic
near $c$ and coincides with $\dot c$ at $c$.

\subsection{Proof of Lemma \ref{lemma:implicit}\label{sec:prooflemma}}

For $i\in \{3,\ldots k\}$, we have 
\[f_i-f_2=\frac{F_i}{\lambda W_1}.
\]
Recall that if $\mathcal F(\mathfrak c, \mathfrak q, \lambda)=0$, then
$f_i=f_2$ for all $i\in \{3,\ldots, k\}$. We denote by $f$ this common
rational map of degree $d$. If in addition$(\dot {\mathfrak c}, \dot
{\mathfrak q}, \dot \lambda)$ belongs to the Kernel of $D\mathcal F$
at $(\mathfrak c, \mathfrak q, \lambda)$, then
\[\dot f_i-\dot f_2 = \frac{\dot F_i}{\lambda W_1} - \frac{F_i\cdot (\dot \lambda W_1+\lambda \dot W_1)}{(\lambda W_1)^2} = 0,\]
so $\dot f_i = \dot f_2$ for all $i\in \{3,\ldots, k\}$. We denote by
$\dot f$ this common tangent vector to $\Rat_d$ at $f$ and by $\eta$
the corresponding meromorphic vector field on $\Pone(\CC)$.

As $(\mathfrak c,\mathfrak q,\lambda)$ varies in $\mathfrak Y_0\times
\CC^*$, the $f_i$-preimages of the points $q_1=\infty$ and $q_i$ vary
holomorphically: they are the points $c_{1,j}$ and $c_{i,j}$.
According to the previous remark, we see that if $\dot{ \mathfrak
  q}=0$ then, for all $i\in \{2,\dots, k\}$, the meromorphic vector
field $\eta$ is holomorphic near $c_{1,j}$, coincides with $\dot
c_{1,j}$ at $c_{1,j}$, and furthermore is holomorphic near $c_{i,j}$
and coincides with $\dot c_{i,j}$ at $c_{i,j}$.

Thus, $\eta$ is a holomorphic vector field on the whole sphere
$\Pone(\CC)$ and coincides with  $\dot c_{i,j}$ at $c_{i,j}$. In
particular, it vanishes at $c_{1,1}=\infty$, $c_{2,1}=0$ and
$c_{3,1}=1$. A holomorphic vector field with at least $3$ zeroes
globally vanishes. Therefore, $\eta=0$ and $\dot c_{i,j}=0$ for all
$(i,j)$.  In addition, for all $i\in \{1,\ldots, k\}$, we have that
$\dot W_i=0$ and for all $i\in \{3,\ldots k\}$, we have that $0=\dot
F_i = \dot \lambda q_i W_1.$ This shows that $\dot \lambda=0$.

Let us summarize: if $\mathcal F(\mathfrak c, \mathfrak q,
\lambda)=0$ and if $(\dot {\mathfrak c},0,\dot \lambda)$ belongs to
the Kernel of $D\mathcal F$ at $(\mathfrak c, \mathfrak q, \lambda)$,
then $\dot {\mathfrak c}=0$ and $\dot \lambda = 0$. So, the
restriction of $D_{(\mathfrak c, \mathfrak q, \lambda)}\mathcal F$ to
$T_\mathfrak c \mathfrak C_0\times \{0\}\times T_\lambda \C$ is
injective. Since $T_\mathfrak c \mathfrak C_0\times \{0\}\times
T_\lambda \C$ and $T_0(\CC[z]_{\deg<d})^{k-2}$ have the same
dimension, that is $(k-2)d$, this restriction is an isomorphism as
required.

\section{Finding a solution in a finite field}
We describe in this section an efficient method of finding a rational
function over a finite field with prescribed critical values and
multiplicities.

We start by recalling some facts about univariate polynomials over
non-algebraically-closed fields $\Bbbk$ of arbitrary
characteristic. For this we need some notation:

\begin{notation}
  An ordered sequence $\alpha = (\alpha_1,\dots,\alpha_k)$ with
  $\alpha_1 \ge \dots \ge \alpha_k>0$ and $\sum\alpha_i = d$ is called a
  \emph{partition} of $d$. With the shorthand notation
  \[\beta^\mu := (\underbrace{\beta,\dots,\beta}_{\text{$\mu$ times}})
  \]
  we can always write $\alpha = (\alpha_1,\dots,\alpha_k) =
  (\beta_1^{\mu_1}, \dots , \beta_n^{\mu_n})$ with $\beta_1 >
  \dots > \beta_n$ and appropriate $\mu_i$. For example,
  $13=4+3+2+2+2$ is written as $\alpha=(4,3,2,2,2)=(4^1,3^1,2^3)$.

  The partition $\alpha^*$ defined by $\alpha_j^* := \#\{i \colon
  \alpha_i \ge j\}$ is called the \emph{dual partition} of
  $\alpha$. For example, $\alpha^*=(5,5,2,1)$.

  Let $f \in \Bbbk[x]$ be a degree-$d$ polynomial, let $l_i = (x-C_i)
  \in \overline\Bbbk[x]$ be its distinct linear factors over an
  algebraic closure of $\Bbbk$, and let $\alpha_i$ be their
  multiplicities, so that
  \[f = \prod_{i=1}^k l_i^{\alpha_i}.\]
  Without restriction we can assume $\alpha_1 \ge \dots \ge \alpha_k$
  and $\alpha = (\alpha_1,\dots,\alpha_k)$ is a partition of $d$. In
  this situation we say that $f$ is of \emph{shape} $\alpha$.

  If we write $\alpha = (\alpha_1,\dots,\alpha_k) =
  (\beta_1^{\mu_1}, \dots , \beta_n^{\mu_n})$ as above, we can
  write
  \[
  f = \prod_{i=1}^n f_i^{\beta_i}
  \]
  with $\deg f_i = \mu_i$ and $f_i$ the product of those linear
  forms that have multiplicity $\beta_i$. In this situation the $f_i$
  are coprime.
\end{notation}

\begin{lem}\label{lgcdone}
  Let $\Bbbk$ be a field of characteristic $p$, let $f\in \Bbbk[x]$ be
  a univariate polynomial of shape $\alpha$ and write $f =
  \prod_{i=1}^k l_i^{\alpha_i}$ with $l_i \in \overline\Bbbk[x]$.  If $\alpha_i
  < p$ for all $i$ then
  \[
  \gcd(f,f') = \prod_{i=1}^k l_i^{\alpha_i-1}.
  \]
\end{lem}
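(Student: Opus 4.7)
The plan is to prove this by explicit computation of $f'$ via the product rule and then to extract the exact highest power of each $l_i$ dividing both $f$ and $f'$.

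First I would write $l_i = x - C_i$ so that $l_i' = 1$, and apply the product rule to $f = \prod_i l_i^{\alpha_i}$ to get
\[
f' = \sum_{j=1}^k \alpha_j\, l_j^{\alpha_j-1} \prod_{i\neq j} l_i^{\alpha_i}.
\]
Each summand is divisible by $\prod_{i=1}^k l_i^{\alpha_i-1}$ (the $j$-th summand carries $l_j^{\alpha_j-1}$ and $l_i^{\alpha_i}$ for $i\neq j$), so I can factor
\[
f' = \Bigl(\prod_{i=1}^k l_i^{\alpha_i-1}\Bigr)\cdot g,\qquad g := \sum_{j=1}^k \alpha_j \prod_{i\neq j} l_i.
\]
Since $f = \bigl(\prod_i l_i^{\alpha_i-1}\bigr)\cdot\prod_i l_i$, it suffices to prove $\gcd(\prod_i l_i,\,g)=1$ in $\overline\Bbbk[x]$.

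The key step is to show that no $l_j$ divides $g$. For this I would evaluate $g$ at the root $C_j$ of $l_j$: every summand with index $i\neq j$ contains the factor $l_j$, hence vanishes at $C_j$, so
\[
g(C_j) = \alpha_j \prod_{i\neq j}(C_j - C_i).
\]
The product is nonzero because the $C_i$ are distinct, and $\alpha_j\neq 0$ in $\Bbbk$ because $0<\alpha_j<p=\operatorname{char}\Bbbk$. Therefore $g(C_j)\neq 0$, so $l_j\nmid g$ for every $j$. Since the $l_j$ are pairwise coprime linear factors, this gives $\gcd(\prod_i l_i,g)=1$ and hence $\gcd(f,f') = \prod_i l_i^{\alpha_i-1}$, as claimed.

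The only subtle point — and really the only place where the hypothesis enters — is the nonvanishing of $\alpha_j$ modulo $p$; without the assumption $\alpha_i<p$ one could have $\alpha_j\equiv 0$, in which case the $j$-th summand of $f'$ would vanish identically and $l_j^{\alpha_j}$ could divide $f'$, inflating the gcd. No other obstacles arise; the argument is a one-page calculation.
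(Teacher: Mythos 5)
Your proof is correct and follows essentially the same route as the paper's: apply the product rule, factor out $\prod_i l_i^{\alpha_i-1}$, and use $0<\alpha_j<p$ together with the distinctness of the roots to rule out any further common factor. The only cosmetic difference is that you show $l_j\nmid g$ by evaluating $g$ at $C_j$, whereas the paper argues by divisibility of the summands; both hinge on exactly the same computation.
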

\begin{proof}
  We have
  \[
  f' = \sum_{i=1}^k\alpha_i\frac{f}{l_i}l_i' = \Bigl(\prod
  l_i^{\alpha_i-1}\Bigr) \sum_{i=1}^k \alpha_i\Bigl(\prod_{j\not=i}
  l_j\Bigr)l_i'.
  \]
  This shows that $\prod_{i=1}^k l_i^{\alpha_i-1}$ divides the
  $\gcd$. Assume now that there is another linear factor $l$ in the
  $\gcd$. Since the $\gcd$ divides $f$ there exists an index $s$ with
  $l=l_s$. Since the $\gcd$ divides $f'$ we have that $l$ divides
  \[
  \sum_{i=1}^k \alpha_i\Bigl(\prod_{j\not=i} l_j\Bigr)l_i'.
  \]
  Now all summands except for $\alpha_s\Bigl(\prod_{j\not=s}
  l_j\Bigr)l_s'$ are divisible by $l$. Since $\alpha_s$ and $l_s'$ are
  nonzero in $\Bbbk$ and $\overline\Bbbk[x]$ respectively, it follows that $l$
  must divide $\prod_{j\not=i} l_j$. This is impossible since the
  $l_s$ are pairwise coprime.
\end{proof}

\begin{cor} \label{cgcdall}
  With the notations of the previous Lemma we  have
  \[
  \gcd(f,f',\dots,f^{(e)}) = \prod_{i \colon \alpha_i>e}
  l_i^{\alpha_i-e}.
  \]
\end{cor}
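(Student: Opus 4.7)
The plan is to proceed by induction on $e$. The base cases $e=0$ (trivial, since $\gcd(f)=f=\prod_i l_i^{\alpha_i}$) and $e=1$ (exactly Lemma~\ref{lgcdone}) require no work. For the inductive step, set $g:=\gcd(f,f',\ldots,f^{(e-1)})$, which by the inductive hypothesis equals $\prod_{\alpha_i\geq e} l_i^{\alpha_i-e+1}$. Since the $\gcd$ operation is associative, $\gcd(f,f',\ldots,f^{(e)})=\gcd(g,f^{(e)})$, so the task reduces to computing, for each linear factor $l_s=x-C_s$, the $l_s$-adic valuations of $g$ (known from the hypothesis) and of $f^{(e)}$.

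The valuation of $f^{(e)}$ at $l_s$ is a local computation. Passing to the coordinate $y:=x-C_s$, write $f=y^{\alpha_s}h(y)$ with $h(0)\neq 0$, and apply the Leibniz rule:
\[f^{(e)}=\sum_{r=0}^{\min(e,\alpha_s)}\binom{e}{r}\,\alpha_s(\alpha_s-1)\cdots(\alpha_s-r+1)\,y^{\alpha_s-r}\,h^{(e-r)}(y).\]
When $e\leq\alpha_s$, only the $r=e$ summand contributes to the coefficient of $y^{\alpha_s-e}$, which equals $\alpha_s(\alpha_s-1)\cdots(\alpha_s-e+1)\cdot h(0)$; the hypothesis $\alpha_s<p$ forces each factor to lie in $\{1,\ldots,p-1\}$ and hence to be nonzero in $\Bbbk$, so $v_{l_s}(f^{(e)})=\alpha_s-e$. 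When $e>\alpha_s$, the factor $l_s$ is already absent from $g$, so $l_s$ contributes trivially to $\gcd(g,f^{(e)})$ regardless of its behavior in $f^{(e)}$.

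Combining these valuations, $v_{l_s}\bigl(\gcd(g,f^{(e)})\bigr)=\max(\alpha_s-e,0)$ for every $s$, which is exactly the exponent of $l_s$ appearing in $\prod_{\alpha_i>e} l_i^{\alpha_i-e}$. The only substantive step is the Leibniz computation yielding $v_{l_s}(f^{(e)})=\alpha_s-e$ in characteristic $p$; the hypothesis $\alpha_i<p$ of Lemma~\ref{lgcdone} is used there, and only there, to prevent the leading falling-factorial coefficient from vanishing modulo $p$.
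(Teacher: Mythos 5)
Your proposal is correct. It follows the same broad strategy as the paper's (very terse) proof --- induction on $e$ anchored on Lemma~\ref{lgcdone} --- but the inductive step is carried out by a genuinely different mechanism. The paper's one-line proof presumably intends to re-apply Lemma~\ref{lgcdone} to $g_{e-1}:=\gcd(f,\dots,f^{(e-1)})$, whose multiplicities $\alpha_i-e+1$ are again $<p$; that route still requires the bridging observation that what one must compute is $\gcd(g_{e-1},f^{(e)})$ rather than $\gcd(g_{e-1},g_{e-1}')$, i.e.\ one must know both that $\prod_{\alpha_i>e}l_i^{\alpha_i-e}$ divides $f^{(e)}$ and that $l_s^{\alpha_s-e+1}$ does not when $\alpha_s\ge e$ --- a point the paper glosses over. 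You instead compute the exact order of vanishing of $f^{(e)}$ at each $l_s$ directly via the iterated Leibniz rule, showing the coefficient of $(x-C_s)^{\alpha_s-e}$ is the falling factorial $\alpha_s(\alpha_s-1)\cdots(\alpha_s-e+1)\cdot h(0)$, nonzero precisely because $\alpha_s<p$; this settles both divisibility directions at once and uses Lemma~\ref{lgcdone} only as the $e=1$ base case (indeed your computation reproves it). What your route buys is a self-contained argument that makes the role of the hypothesis $\alpha_i<p$ completely explicit at the level of higher derivatives; what the paper's route buys is brevity, reusing the already-proved lemma on a polynomial of smaller shape. Both are valid, and your write-up supplies exactly the detail the paper's ``Lemma and induction'' leaves implicit.
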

\begin{proof}
  Lemma \ref{lgcdone} and induction.
\end{proof}

\begin{cor}  \label{cDualShape}
  With the notations above let $\alpha^*$ be the dual partition of $\alpha$. Then
  \[
  \alpha_e^* = \deg \gcd(f,f',\dots,f^{(e-1)}) - \deg
  \gcd(f,f',\dots,f^{(e)}).
  \]
\end{cor}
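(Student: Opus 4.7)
The plan is to reduce the statement to a bookkeeping identity by plugging in the explicit formula from Corollary~\ref{cgcdall}. Since that corollary gives
\[\gcd(f,f',\dots,f^{(e)}) = \prod_{i\colon \alpha_i>e} l_i^{\alpha_i-e},\]
and the $l_i$ are pairwise coprime linear forms, I would first read off the degree:
\[\deg \gcd(f,f',\dots,f^{(e)}) = \sum_{i\colon \alpha_i>e}(\alpha_i-e).\]
The analogous formula with $e$ replaced by $e-1$ gives the degree of $\gcd(f,f',\dots,f^{(e-1)})$ as $\sum_{i\colon \alpha_i\ge e}(\alpha_i-e+1)$.

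Next, I would subtract the two sums. The index sets differ by exactly those $i$ with $\alpha_i=e$, which contribute $1$ each, and for the indices with $\alpha_i>e$ the summand telescopes from $\alpha_i-e+1$ down to $\alpha_i-e$, again contributing $1$. Therefore
\[\deg \gcd(f,\dots,f^{(e-1)})-\deg \gcd(f,\dots,f^{(e)}) = \#\{i\colon \alpha_i=e\}+\#\{i\colon \alpha_i>e\} = \#\{i\colon \alpha_i\ge e\}.\]
By the definition of the dual partition, the right-hand side is exactly $\alpha_e^*$, which finishes the proof.

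There is no real obstacle here: the work has already been done in Corollary~\ref{cgcdall}, and what remains is a one-line combinatorial identity about partitions (the fact that $\alpha^*_e$ counts parts of $\alpha$ of size at least $e$). The only thing to be a little careful about is the boundary case $e=1$, where $f^{(e-1)}=f$ and $\deg\gcd(f,\dots,f^{(e-1)})=\deg f=d=\sum_i\alpha_i$, which agrees with the sum formula above since $\alpha_i\ge 1$ for every $i$.
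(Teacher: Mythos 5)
Your proposal is correct and follows essentially the same route as the paper: both arguments plug the explicit formula of Corollary~\ref{cgcdall} into the two gcd's, the only cosmetic difference being that the paper divides the polynomials to obtain $\prod_{i\colon\alpha_i\ge e}l_i$ and then takes the degree, while you take degrees first and subtract the resulting sums.
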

\begin{proof}
  We have
  \[
  g_e := \frac {\gcd(f,f',\dots,f^{e-1})} {\gcd(f,f',\dots,f^{e})} =
  \frac { \prod_{i \colon \alpha_i>e-1} l_i^{\alpha_i-e+1}} { \prod_{i
      \colon \alpha_i>e} l_i^{\alpha_i-e}} = \prod_{i \colon \alpha_i
    \ge e} l_i.
  \]
  It follows that
  \[
  \deg g_e = \deg \prod_{i \colon \alpha_i \ge e} l_i  = \#\{i \colon \alpha_i \ge e\} = \alpha_e^*.\qedhere
  \]
\end{proof}

\begin{alg}[Compute the shape of a polynomial]\label{aShape}\rule{0mm}{0mm}\\
  \underline{Given:} a polynomial $f\in\Bbbk[x]$\\
  \underline{Return:} the shape $(\alpha_1,\dots,\alpha_k)$ of $f$.

  Write $d=\deg(f)$. For each $e=0,\dots,d$, compute
  $g_e=\gcd(f,f',\dots,f^{(e)})$. For each $e=1,\dots,d$ define then
  $\alpha^*_e=\deg(g_{e-1})-\deg(g_e)$. Return the dual of the partition
  $(\alpha^*_1,\dots,\alpha^*_d)$.
\end{alg}
\begin{proof}[Proof of validity]
  This directly follows from Corollary~\ref{cDualShape}.
\end{proof}

We may collect linear factors of the same multiplicity, so as to avoid
field extensions:
\begin{cor}\label{cRational}
  With the notation of Lemma \ref{lgcdone} choose $\beta_1 > \dots >
  \beta_n$ among the $\alpha_i$ such that $f= \prod_{i=1}^n
  f_i^{\beta_i}$ with $f_j = \prod_{i\colon \alpha_i = \beta_j}
  l_i$. Then the $f_i$ are defined over $\Bbbk$.
\end{cor}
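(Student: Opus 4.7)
The plan is to express each $f_j$ as an explicit quotient of $\gcd$s of $f$ and its derivatives, and to observe that such $\gcd$s automatically live in $\Bbbk[x]$ since the Euclidean algorithm never leaves the ground field.

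Concretely, first I would reuse the computation already carried out in the proof of Corollary~\ref{cDualShape}: for $e\ge 1$, setting
\[
g_e := \frac{\gcd(f,f',\dots,f^{(e-1)})}{\gcd(f,f',\dots,f^{(e)})},
\]
one has $g_e = \prod_{i\colon\alpha_i\ge e} l_i$ (under the standing hypothesis $\alpha_i<p$, which we inherit from Lemma~\ref{lgcdone}). Since the $\alpha_i$ take only integer values and the distinct ones are $\beta_1>\dots>\beta_n$, the collection $\{i\colon\alpha_i=\beta_j\}$ equals $\{i\colon\alpha_i\ge\beta_j\}\setminus\{i\colon\alpha_i\ge\beta_j+1\}$, so
\[
f_j = \prod_{i\colon\alpha_i=\beta_j} l_i = \frac{g_{\beta_j}}{g_{\beta_j+1}}.
\]

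Second I would conclude by rationality of $\gcd$s. Each $\gcd(f,f',\dots,f^{(e)})$ can be taken monic and computed by the Euclidean algorithm purely inside $\Bbbk[x]$, and equals the $\gcd$ over $\overline\Bbbk$ up to a unit; in particular each $g_e$ lies in $\Bbbk[x]$. The division $g_{\beta_j}/g_{\beta_j+1}$ is exact (we know a priori that the quotient is the polynomial $\prod_{\alpha_i=\beta_j} l_i$), so the quotient is again in $\Bbbk[x]$, giving $f_j\in\Bbbk[x]$ as required.

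The only subtle point is to make sure the identification of $g_e$ with the claimed product is formally available, which is handled by Corollary~\ref{cgcdall}; apart from that the argument is a direct bookkeeping step and no obstacle is expected.
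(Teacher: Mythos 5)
Your proposal is correct and is essentially the paper's own argument: both express $f_j$ as a quotient of $\gcd(f,f',\dots,f^{(e)})$'s via Corollary~\ref{cgcdall}, note that these $\gcd$s are computed without leaving $\Bbbk[x]$, and conclude by exactness of the division (the paper phrases this as $\overline\Bbbk[x]\cap\Bbbk(x)=\Bbbk[x]$). The only difference is bookkeeping — you take ratios of the quotients $g_e$ at consecutive derivative orders $\beta_j,\beta_j+1$, while the paper works with the cumulative products $\prod_{i\le j}f_i$ — which is immaterial.
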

\begin{proof}
  Since the calculation of a $\gcd$ does not require field extensions,
  we have
  \[
  \gcd(f,f',\dots,f^{(e)}) \in \Bbbk[x].
  \]
  By Corollary \ref{cgcdall} we then have
  \[
  g_j := \frac {\gcd(f,f',\dots,f^{(\beta_j-1)})}
  {\gcd(f,f',\dots,f^{(\beta_j)})} = \frac { \prod_{i \colon
      \beta_i>\beta_j-1} f_i^{\beta_i-\beta_j+1}} { \prod_{i \colon
      \beta_i>\beta_j} f_i^{\beta_i-\beta_j}} = \prod_{i=1}^j f_i \in
  \Bbbk[x]
  \]
  so
  \[
  f_j = \frac{g_j}{g_{j+1}} \in \overline\Bbbk[x]\cap\Bbbk(x)=\Bbbk[x].\qedhere
  \]
\end{proof}

We are now ready to describe our algorithm searching for rational maps
over $\FF_p$.

\begin{alg}[Compute all rational maps over $\Bbbk$ with given shape above given points]\label{aOne}\rule{0mm}{0mm}\\
  \underline{Given:} a finite field $\Bbbk$, a list of points
  $\overline Q_1,\dots,\overline Q_k\in\Pone(\Bbbk)$, an integer
  $d$, and a list of partitions $(\alpha_1,\dots,\alpha_k)$ of $d$
  with $\alpha_i=(\alpha_{i,1},\dots,\alpha_{i,\ell_i})$ satisfying
  $\sum_i\sum_j(\alpha_{i,j}-1)=2d-2$\\
  \underline{Return:} all rational maps over $\Bbbk$ of degree $d$
  such that every $\overline Q_i$ has $\ell_i$ preimages with local
  degrees $\alpha_{i,1},\dots,\alpha_{i,\ell_i}$ respectively.

  We choose a M\"obius transformation $M\in\PSL_2(\Bbbk)$ sending
  $\overline Q_1$ to $\infty$ and $\overline Q_2$ to $0$.

  We write each partition $\alpha_i$ in compacted form as
  $\alpha_i=(\beta_{i,1}^{\mu_{i,1}},\dots,\beta_{i,n_i}^{\mu_{i,n_i}})$.

  We enumerate all $\ell_1$-tuples of monic polynomials
  $(f_1,\dots,f_{n_1})$ with $\deg(f_j)=\mu_{i,j}$, and all
  $\ell_2$-tuples of monic polynomials $(g_1,\dots,g_{n_2})$ with
  $\deg(g_j)=\mu_{2,j}$.

  For each such pair of tuples, we compute
  \[W_1 = \prod_{j=1}^{n_1} f_j^{\beta_{1,j}}\text{ and }
  W_2 = \prod_{j=1}^{n_2} g_j^{\beta_{2,j}}.\]

  Using Algorithm~\ref{aShape}, we filter those $(W_1,W_2)$ such that
  the shape of $W_1$ is $\alpha_1$ and the shape of $W_2$ is
  $\alpha_2$ (this fails only if a pair $f_i,g_j$ is not coprime).

  By computing their g.c.d., we filter those $(W_1,W_2)$ such that
  $W_1$ and $W_2$ are coprime.

  For each $i=3,\dots,k$, let $\Lambda_i\subset\Bbbk$ be the set of
  $\lambda\in\Bbbk$ such that the shape (computed using
  Algorithm~\ref{aShape}) of $W_i:=W_2-\lambda M(\overline Q_i)W_1$ is
  $\alpha_i$. We filter those rational maps for which
  $\bigcap_{i=3}^k\Lambda_i$ is non-empty.

  We return all the rational maps $M^{-1}\circ(W_2/\lambda W_1)$, for
  all $\lambda\in\bigcap_{i=3}^k\Lambda_i$, that survived the
  filtering.
\end{alg}
\begin{proof}[Proof of validity]
  Let first $f:=M^{-1}\circ(W_2/\lambda W_1)$ be a rational map
  returned by the algorithm. For $i=2,\dots,k$, consider the rational
  map $f_i=M^{-1}\circ(W_i/\lambda W_1)$. By the very definition of
  $W_i$ (compare with~\eqref{eq:buff}), we have $f=f_2=\dots=f_k$. On
  the other hand, the $f_i$-preimages of $\overline Q_i$ are the
  zeroes of $W_i$, so they have multiplicities $\alpha_i$.

  On the other hand, let $f$ be a rational map such that every
  $\overline Q_i$ has $\ell_i$ preimages with local degrees
  $\alpha_{i,1},\dots,\alpha_{i,\ell_i}$ respectively. Then, for every
  M\"obius transformation $M$ sending $\overline Q_1$ to $\infty$ and
  $\overline Q_2$ to $0$, the rational map $M\circ f$ will be of the
  form $W_2/\lambda W_1$, for monic polynomials $W_1,W_2$ of
  respective shapes $\alpha_1,\alpha_2$ and a scalar
  $\lambda\in\Bbbk$. Furthermore, by Corollary~\ref{cRational}, both
  $W_1$ and $W_2$ factor over $\Bbbk[x]$ into polynomials of degrees
  $\mu_{1,j}$ and $\mu_{2,j}$ respectively. The rational map
  $M\circ f-\overline Q_i$ will be of the form $W_i/\lambda W_1$ with
  $W_i$ of shape $\alpha_i$; therefore, that solution $f$ will be
  returned by the algorithm.
\end{proof}

\begin{rem}
  Algorithm~\ref{aOne} is the most computationally-intensive part of
  our procedure. Its performance is improved in the following ways:
  \begin{enumerate}
  \item If $\mu_{i,j} = 1$ for some $i,j$, then we may assume,
    after permuting the shapes $\alpha_i$, that $i=1$ so that $W_1$
    contains a power of a linear factor $f_i^{\beta_{i,j}}$. Fixing
    the corresponding preimage of $\infty$ to be $\infty$ amounts to
    the choice $f_i=1$, so that the degree of $W_1$ is actually
    $d-\beta_{i,j}$. This speeds up the search by a factor $p$.

    Similarly, if up to permutation of the indices there are more
    $\mu_{i,j}=1$, with $i\in\{1,2\}$, then the corresponding
    factors may be assumed to be $x$ and $x-1$.

    On the other hand, if all $\mu_{i,j}\ge2$, then no
    normalization of the critical points may be assumed, and in
    particular $\infty$ should not be assumed to be a preimage of some
    $\overline Q_i$.
  \item When using Corollary \ref{cDualShape} one can detect a wrong
    shape already if $\gcd(f,f')$ or for that matter any
    $\gcd(f,\dots,f^{(e)})$ with $e < \alpha_1$ has the wrong
    degree. We stop the calculation of $\gcd$'s as soon as this
    happens. This speeds up the process by a factor of about
    $\alpha_{1}$. Similarly, as soon as the intersection of the
    $\Lambda_i$ already computed is empty, the pair $(W_1,W_2)$ should
    be discarded.
  \item If the largest $\mu_{i,j}$ is small enough (e.g. $7$ or $8$
    in $\FF_{11}$) we can enumerate all monic irreducible
    homogeneous polynomials of degree $\le \mu_{i,j}$ and build the
    $f_i$ and $g_i$ out of them, while taking care that no irreducible
    piece is used twice.  We can then omit checking shape and
    coprimeness of $W_1$ and $W_2$ as these conditions are then
    automatically satisfied.
  \end{enumerate}
\end{rem}

\begin{example}\label{ex:search}
  Over $\FF_{11}$ we searched for a rational map of shape
  $(4,3,2,2,2)$, $(4,3,2,2,2)$ and $(4,3,2,2,2)$; we chose $\overline
  Q_1=\infty$ and $\overline Q_2=0$, and didn't specify $\overline
  Q_3$, letting on the contrary the algorithm determine choose it for
  us. We found the solution
  \[
  \frac{W_2}{W_1} = \frac{x^{4} (x+3)^{3} (x^{3}-3x-5)^{2}}{(x-5)^{3}
    (x^{3}+3x^{2}+2x+3)^{2}}.
  \]
  It has indeed the desired shape as we have the following
  factorisation
  \[W_2+4W_1 = (x-1)^{4} (x-3)^{3} (x^{3}-2x-3)^{2},
  \]
  which implies, for the choice $\overline Q_3=1$, the value
  $\lambda=-4$.
\end{example}

\section{Lifting a solution from $\FF_p$ to $\ZZ_p$}
The lift from $\FF_p$ to $\ZZ_p$ is done using Hensel's lemma (namely,
Newton's method in positive characteristic):
\begin{prop}[Hensel's Lemma]\label{pHensel}
  Let $\mathcal F = (F_1,\dots,F_m)$ be a vector of polynomials, with
  $F_i \in \ZZ[x_1,\dots,x_m]$, and let $J=(\frac{\mathrm
    d F_i}{\mathrm d x_j})$ be the Jacobian matrix of $F$. Assume that
  $a = (a_1,\dots,a_m) \in \ZZ^m$ satisfies
  \[\mathcal F(a) \equiv 0 \mod p^N,
  \]
  that $J(a)$ is invertible modulo $p^N$, and let $J^{-1}(a)$ be an
  inverse modulo $p^N$.  Then
  \[\mathcal F(a+b p^N) \equiv 0 \mod p^{2N},
  \]
  for
  \[b :=  -\frac{F(a)}{p^N}J^{-1}(a).
  \]
  Furthermore $J(a+b p^N)$ is invertible modulo $p^{2N}$.
\end{prop}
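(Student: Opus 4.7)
The plan is to prove this by the standard Taylor-series argument at the heart of Newton's method, carried out in the discrete $p$-adic setting. First, since each $F_i$ is a polynomial, Taylor's formula gives
\[\mathcal F(a + bp^N) = \mathcal F(a) + J(a)\cdot(bp^N) + R,\]
where $R$ is the sum of all monomial contributions of degree $\ge 2$ in the entries of $bp^N$. Every such monomial is divisible by $(p^N)^2 = p^{2N}$, so $R \equiv 0 \pmod{p^{2N}}$ automatically, and the congruence to be proved reduces to the linear statement $J(a)\cdot(bp^N) \equiv -\mathcal F(a) \pmod{p^{2N}}$.

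The second step exploits the choice of $b$ together with the hypothesis $\mathcal F(a) \in p^N \ZZ^m$. The matrix $J^{-1}(a)$ is only an inverse of $J(a)$ modulo $p^N$, so there is an integer matrix $M$ with $J(a)\,J^{-1}(a) = I + p^N M$. Substituting the prescribed $b$ then yields
\[J(a)\cdot(bp^N) = -J(a) J^{-1}(a)\mathcal F(a) = -\mathcal F(a) - p^N M\mathcal F(a).\]
The error term $p^N M \mathcal F(a)$ lies in $p^{2N}\ZZ^m$ because $\mathcal F(a)$ is itself divisible by $p^N$; this is precisely why the hypothesis only demands that $J^{-1}(a)$ be an inverse modulo $p^N$ rather than modulo $p^{2N}$. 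Combined with the vanishing of $R$ modulo $p^{2N}$, this proves the first assertion.

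For the invertibility claim, I would use the fact that each entry of $J(a + bp^N)$ is a polynomial in $bp^N$ whose constant term is the corresponding entry of $J(a)$, so $J(a + bp^N) \equiv J(a) \pmod{p^N}$. Taking determinants gives $\det J(a + bp^N) \equiv \det J(a) \pmod{p^N}$; since $J(a)$ is invertible modulo $p^N$, the determinant $\det J(a)$ is a unit modulo $p^N$, and in particular is coprime to $p$. Therefore $\det J(a + bp^N)$ is also coprime to $p$, hence a unit in $\ZZ/p^{2N}\ZZ$, and $J(a + bp^N)$ is invertible modulo $p^{2N}$. I do not expect any genuine obstacle: the entire proof is bookkeeping, and the one delicate point --- the discrepancy between ``$J^{-1}(a)$ modulo $p^N$'' and a true inverse --- is absorbed by the extra factor of $p^N$ coming from $\mathcal F(a)$.
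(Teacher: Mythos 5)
Your proof is correct. The main congruence is established by essentially the same argument as the paper's: a Taylor expansion in which all terms of degree at least two are killed by $(p^N)^2$, followed by the observation that the defect $J(a)J^{-1}(a)=I+p^NM$ of the approximate inverse is harmless because it gets multiplied by $\mathcal F(a)\in p^N\ZZ^m$ (the paper performs the same cancellation, just less explicitly; your spelling-out of the matrix $M$ is a nice clarification, and the row-versus-column discrepancy with the statement's $b:=-\frac{F(a)}{p^N}J^{-1}(a)$ is immaterial since a one-sided inverse of a square matrix over $\ZZ/p^N$ is two-sided). Where you genuinely diverge is the invertibility claim: the paper proves it by exhibiting an explicit inverse modulo $p^{2N}$, namely writing $AB\equiv\mathbb1+p^NC\pmod{p^{2N}}$ and checking that $B'=B-p^NBC$ inverts $A$ modulo $p^{2N}$; you instead note $J(a+bp^N)\equiv J(a)\pmod{p^N}$ and argue via determinants that $\det J(a+bp^N)$ is prime to $p$, hence a unit modulo $p^{2N}$. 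Your route is shorter and actually yields more (invertibility of the Jacobian at any lift of $a$ modulo every power of $p$), while the paper's constructive version has an algorithmic payoff: since Hensel's lemma is applied repeatedly, the formula $B'=B-p^NBC$ hands you the inverse needed at the next iteration without recomputing one, which is precisely how the doubling-precision lifting in Algorithm~\ref{aLift} is run in practice.
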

\begin{proof}
  $F(a)$ is divisible by $p^N$ since $F(a) \equiv 0 \mod p^N$. Therefore
  $b$ is well defined.  We have
  \begin{align*}
    F(a+p^N b) &\equiv F(a) + b J(a) p^N \mod p^{2N} \\
    &\equiv F(a) -  \frac{F(a)}{p^N}J^{-1}(a) J(a) p^N \mod p^{2N} \\
    &\equiv 0 \mod p^{2N}.
  \end{align*}
  The invertibility holds more generally. Let $A$ and $B$ be matrices
  with $AB \equiv \mathbb1 \mod p^N$. We can then write
  \[AB \equiv \mathbb1 + p^N C \mod p^{2N}.
  \]
  In this situation we have
  \begin{align*}
    A(B-p^NBC) &\equiv \mathbb1+p^N C-p^N A B C \mod p^{2N}\\
    &\equiv \mathbb1 \mod p^{2N}
  \end{align*}
  since $AB \equiv \mathbb1 \mod p^N$; so $B' = B-p^NBC$ is an inverse
  to $A$ modulo $p^{2N}$.
\end{proof}

Consider the following data: a ring $\Bbbk$; a family of polynomials
$W_i\in\Bbbk[x]$ of degree at most $d$, for $i=1,\dots,k$, with
factorisations $W_i=\prod_{j=1}^{n_j}W_{i,j}^{\beta_{i,j}}$; a
parameter $\lambda\in\Bbbk^\times$; and a sequence of points
$Q_1=\infty,Q_2,\dots,Q_k\in\Pone(\Bbbk)$. We say that they are
\emph{coherent} if $W_i/\lambda W_1+Q_i$ is independent of
$i=2,\dots,k$.

We say that they are \emph{normalised} if the following holds: the
first three values $Q_1,Q_2,Q_3$ are $\infty,0,1$ respectively; and
the first three preimages $C_{1,1},C_{2,1},C_{3,1}$ are also
respectively $\infty,0,1$.  This means that we assume that $W_1$ has
degree $d-\alpha_{1,1}$, that $W_{2,1}=x^{\alpha_{2,1}}$, and that
$W_{3,1}=(x-1)^{\alpha_{3,1}}$.

Note that this assumption is not innocuous: it may well be that no
critical point $C_{i,j}$ is defined over $\Bbbk$. The normalization
may be imposed at no cost if (after permutation of the indices)
$\mu_{1,1}=\mu_{2,1}=\mu_{3,1}=1$.

We are now ready to detail the lifting algorithm. Out of coherent data
in $\FF_p$ and a parameter $N$, it computes a $p^{-N}$-approximation
of the corresponding coherent data in $\ZZ_p$, in the form of an
approximation in $\ZZ/p^N$.
\begin{alg}[Lift a solution $p$-adically]\label{aLift}\rule{0mm}{0mm}\\
  \underline{Given:} coherent data $\overline
  W_i=\prod_{j=1}^{n_j}\overline W_{i,j}^{\beta_{i,j}}\in\FF_p[x]$,
  $\overline\lambda\in\FF_p^\times$, and $\overline
  Q_i\in\Pone(\FF_p)$; a parameter $N\in\NN$; and lifts
  $Q_i\in\Pone(\ZZ/p^N)$ of the points $\overline Q_i$\\
  \underline{Return for infinitely many $p$:} coherent data
  $W_i=\prod_{j=1}^{n_j}W_{i,j}^{\beta_{i,j}}\in(\ZZ/p^N)[x]$ and
  $\lambda\in(\ZZ/p^N)^\times$ that reduce mod $p$ to $\overline W_i$.

  First, we assume that the data may be normalised. This amounts to
  requiring at least three of the $W_{i,j}$, for distinct $i$'s, to
  have a linear factor. This holds for a positive proportion of primes
  $p$. If no such three factors exist, the algorithm
  aborts. Otherwise, we silently replace the three corresponding
  $\beta_{i,j}^{\mu_{i,j}}$ by
  $(\beta_{i,j}^1,\beta_{i,j}^{\mu_{i,j}-1})$ in the shapes so as to
  create a term with $\mu_{i,j}=1$.

  We write now each $W_{i,j}$ in the form
  \begin{equation}\label{eq:W}
    W_{i,j}=x^{\mu_{i,j}}+\sum_{s=1}^{\mu_{i,j}}w_{i,j,s}x^{\mu_{i,j}-s},
  \end{equation}
  for unknowns $w_{i,j,s}\in\Z/p^N$.

  Recall from~\eqref{eq:buff} the expressions $F_i=W_i+\lambda
  q_i W_1-W_2$ and $\mathcal F=(F_3,\dots,F_k)$. The $F_i$ are
  polynomials in the variables $\{w_{i,j,s}\colon
  (i,j)\neq(1,1),(2,1),(3,1)\}\cup\{\lambda\}$. We lift the
  co\"efficients of the coherent data $(\overline
  W_i,\overline\lambda)$ to $\Z$, to obtain an initial parameter
  $a^0=(w_{1,2,1}^0,\dots,w_{k,n_k,\mu_{k,n_k}}^0,\lambda^0)$. Since
  the original data is coherent, we have $\mathcal F(a^0)\equiv0\pmod
  p$. For almost all $p$, the Jacobian $D\mathcal F$ is invertible by
  Corollary~\ref{cor:buff}; if $D\mathcal F$ is not invertible at
  $a^0$, then we abort the algorithm. Otherwise, we apply repeatedly
  Hensel's Lemma~\ref{pHensel} to obtain a solution $a$ to $\mathcal
  F(a)\equiv0\pmod{p^N}$.

  Finally, we reconstruct the polynomials $W_{i,j}$ out of their
  co\"efficients (which are just co\"ordinates of $a$).
\end{alg}
\begin{proof}[Proof of validity]
  The invertibility of the Jacobian was expressed in
  Corollary~\ref{cor:buff} in terms of the variables $C_{i,j}$. This
  does not make any difference: here we express them in terms of the
  $w_{i,j,s}$, which are elementary symmetric functions of the
  $C_{i,j}$.
\end{proof}

\begin{example}
  Consider the shapes $\alpha_1=\alpha_2=\alpha_3 =(4^1,3^1,2^3)$. Our
  example
  \begin{align*}
    W_1 &= (x-5)^3(x^3+3x^2+2x+3)^2\\
    W_2 &= x^4 (x+3)^3 (x^3-3x-5)^2\\
    W_3 = W_2+4W_1 &= (x-1)^4(x-3)^3 (x^3-2x-3)^2
  \end{align*}
  gives a vector of co\"efficients
  \begin{align*}
    a^0 &= \begin{array}[t]{c@{}c@{}c}( & w_{1,2,1},w_{1,3,1},w_{1,3,2},w_{1,3,3}, \\
      &w_{2,2,1},w_{2,3,1},w_{2,3,2},w_{2,3,3}, \\
      &w_{3,2,1},w_{3,3,1},w_{3,3,2},w_{3,3,3},&\lambda)
    \end{array}\\
    &= (-5,3,2,3,\; 3,-3,0,-5,\; -3,-2,0,-3,\; -4)
  \end{align*}
  with $F(w_0) \equiv 0\pmod{11}$. The lift is
  \[a^1 = (50,-41,13,25,\;-19,-33,19,-60,\;-47,11,-46,-58,\;51)
  \]
  with $F(w_1) \equiv 0 \pmod{11^2}$. We can continue this process
  inductively. Notice that the precision doubles in every step.
\end{example}	

\section{Promoting a solution from $\ZZ_p$ to a number field $\KK$}
If the Hurwitz problem has a solution over $\ZZ$ that reduces to a
given solution over $\FF_p$, then Hensel lifting will find it after a
finite number of steps. Unfortunately the solutions usually involve
fractional co\"efficients, and are usually defined over a finite
extension $\KK$ of $\QQ$. Our first goal will therefore be to
determine this extension.

Consider a degree-$e$ extension $\KK$ of the rationals, and
$a\in\KK$. Then $1,a,\dots,a^e$ are linearly dependent over $\QQ$, and
therefore also over $\ZZ$, i.e there exists a polynomial
\[P = p_0 + p_1t + \dots + p_e t^e
\]
with all $p_i\in\ZZ$ and $P(a) = 0$. Let now $p\in\NN$ be a prime such
that $P$ splits over $\ZZ_p$; so that we may view $\QQ(a)$ as a
subfield of $\QQ_p$. Assume also that $a$ is invertible modulo $p$,
so that we may consider $a$ as an element of $\ZZ_p$.

Consider now $\tilde a \equiv a \pmod{p^N}$; then we have the equation
\[P(\tilde a) = p_{e+1}  p^N 
\]
which is linear in $\{p_0,\dots,p_{e+1}\}$. We use the LLL
algorithm~\cite{lenstra-l-l:factoring} to find small integer solutions
to this linear equation. The default implementation uses a simple
heuristic to guess the correct precision $N$ and the correct extension
degree: for a initial precision we start with extension degree $e=1$
and increase $e$ until a solution is found (i.e.\ $F(\tilde a) = 0
\mod p^{2N}$) or the computed shortest lattice basis vector norm is
the same for $e$ and $e-1$. If the computed vector norm did not
change, we increase the $p$-adic precision.  If we have a-priori
knowledge about the minimum or maximum expected extension degree, then
it can be passed to the algorithm, which is more likely to find
quickly a solution.

The following algorithm is described as a process that, receiving as
input an infinite feed of ever-more-precise approximations of a
$p$-adic number that is known to be algebraic, produces an infinite
stream of ever-more-likely minimal polynomials of that $p$-adic
number.
\begin{alg}[Convert a $p$-adic number to an algebraic number]\label{aLLL}\rule{0mm}{0mm}\\
  \underline{Given:} approximations, to arbitrary precision, of an
  algebraic number $a\in\ZZ_p$\\
  \underline{Return:} polynomials $P(t)\in\ZZ[t]$ whose likelihood
  converges to $1$ of being the minimal polynomial of $a$, as the
  precision of $a$ improves.

  Assume that, for each $N\in\N$, the algorithm may receive an
  approximation $a_N$, to $N$ base-$p$ digits, of $a$. The element
  $a_N$ is represented as an integer in $\{0,\dots,p^N-1\}$.

  Start with $d=1$ and $N=1$. Then, repeat the following.  Consider
  the lattice in $\R^{d+1}$ generated by the columns of the matrix
  \[M=\begin{pmatrix}
    p^N & -a_N & -a_N^2 & \dots & -a_N^d\\
    0   & 1     & 0   & \dots & 0\\
    0   & 0     & 1   & \dots & 0\\
    \vdots & \vdots & & \ddots & \vdots\\
    0   & 0     & 0   & \dots & 1
  \end{pmatrix}.
  \]
  Using the LLL algorithm, find a vector $(P_0,P_1,\dots,P_d)$ in
  the lattice, of small norm $\theta_{N,d}$. Form the polynomial
  $P(t)=\sum_{i=0}^d P_it^i$.

  If $P(a_N)\equiv0\pmod{p^{2N}}$, or if $d>1$ and
  $\theta_{N,d}=\theta_{N,d-1}$, then output $P$ as a candidate
  polynomial. Repeat then, after having incremented $d$ if the first
  case holds, and doubled $N$ otherwise.
\end{alg}
\begin{proof}[Proof of validity]
  The algorithm repeatedly increases $N$ and $d$. Note that the
  polynomials returned may have degree $<d$, so increasing $d$ is
  harmless, and the precision is increased as soon as increase in
  maximal degree does not improve the solution.

  Let $(P_0,\dots,P_d)$ be a short lattice vector. Then this vector is
  \[M\cdot\;^t(P(a_N)/p^N,P_1,\dots,P_d)\] and in particular
  $P(a_N)\equiv0\pmod{p^N}$. On the other hand, the co\"efficients
  $P_i$ are small, so $P$ is likely to be the minimal polynomial of
  $a$.
\end{proof}

\begin{example}
  Considering our example $\alpha_1 = \alpha_2 = \alpha_3 =
  (4^1,3^1,2^3)$ we found
  \[w_{1,2,1} = {\scriptstyle1400834756308742009361916361765119584358776523123371526525883115012}\in\Z/11^{2^6}
  \]
  and $P_{1,2,1}(w_{1,2,1}) \equiv 0 \pmod{11^{2^6}}$ for
  \[P_{1,2,1}(t) = 39t^6+117t^5+195t^4+195t^3+141t^2+63t+16.\] Higher
  precision values of $w_{1,2,1}$ are also zeroes of the same
  polynomial $P_{1,2,1}$.  We take this as a hint that $P_{1,2,1}$ is
  indeed the minimal polynomial of the co\"ordinate $w_{1,2,1}$ in the
  lift of our finite field solution.
\end{example}

Having found the minimal polynomials $P_{i,j,s}\in\ZZ[t]$ for all
co\"ordinates $w_{i,j,s}$ of our solution vector, we determine the
field $\KK$ on which they are all defined, as the compositum of all
field extensions defined by the $P_{i,j,s}$. If these field extensions
were independent, then we should just consider all zeroes in $\CC$ of
the $P_{i,j,s}$ and return the corresponding rational
functions.

However, in general, the field extensions will be highly dependent. To
simplify notation, let us assume that all $P_{i,j,s}$ are of degree
$e$, and that $\KK$ itself is a degree-$e$ extension. Then there are
$e$ possible values for each co\"ordinate. At this stage we do not
know how to combine these single co\"ordinate solutions to a solution
vector (there are $e^{d(k-2)}$ possible combinations). To solve this
problem we use the following method.

To illustrate our method, consider $a,b \in \KK$ and let $P_a,
P_b,P_{a+b}\in \ZZ[x]$ be minimal polynomials of $a,b,a+b$
respectively. Assume furthermore that $P_a$, $P_b$ and $P_{a+b}$ are
of degree $e$, and let $a_1,\dots a_e$, $b_1,\dots,b_e$ and
$c_1,\dots,c_e$ respectively be approximations over $\CC$ of the
zeroes of $P_a$, $P_b$ and $P_{a+b}$. Consider the $e\times e$ ``root
compatibility matrix'' $M = (M_{i,j})$ defined by
\[M_{i,j} = \begin{cases}
  1 & \text{ if there exists $k$ with }a_i + b_j \approx c_k,\\
  0 & \text{ otherwise.}
\end{cases}
\]
If $M$ is a permutation matrix, then it describes which root $b_j$
should be paired with $a_i$, namely it is characterised by
$M_{i,j}=1$. In this manner, all other co\"ordinates are chosen,
dependent on the first choice of a root of $P_{1,2,1}$.

\begin{example}
  Tentative minimal polynomials of $w_{1,3,1}$ and
  $a=w_{1,2,1}+w_{1,3,1}$ are
  {\footnotesize\begin{align*}
    P_{1,3,1}&=28431t^6+255879t^5+982449t^4+2056509t^3+2465721t^2+1597239t+439138\\
    P_a&=28431t^6+341172t^5+1844856t^4+5660928t^3+10384524t^2+10807344t+5068144
  \end{align*}}

  We obtain the following approximate zeroes over $\CC$ using Brent's
  method, implemented in PARI~\cite{pari:user}; we preserve the
  ordering in which the roots were returned.
  \[\begin{array}{ccc}
    w_{1,2,1} & w_{1,3,1} & w_{1,2,1} + w_{1,3,1} \\ \hline
    -.150 + .807i 		& -1.5   -1.02i  & -2.161-1.184i\\
    -.150 - .807i           	& -1.5   +1.02i  & -2.162+1.184i\\
    -.5   + .440i 		& -2.012 - .377i &  -2-1.462i\\
    -.5   - .440i  		& -2.012 + .377i & -2 + 1.462i\\
    -.850 + .807i		& -.988  - .377i & -1.839-1.184i\\
    -.850 - .807i		& -.988  + .377i & -1.839+1.184i
  \end{array}
  \]
  Now consider the compatibility matrix $M$. It is
  \[\begin{pmatrix}
    0& 0& 0&  1 & 0 &  0\\
    0& 0&  1 & 0& 0 & 0 \\
    0&  1 & 0& 0& 0 & 0\\
    1 & 0& 0& 0& 0 & 0 \\
    0& 0& 0& 0& 0 &  1  \\
    0& 0& 0& 0&  1  & 0 
  \end{pmatrix},
  \]
  i.e.\ $M_{i,j}$ is $1$ precisely when $(w_{1,2,1})_i+(w_{1,3,1})_j$
  approximates one of the values in the last column of the table
  above. We note that $M$ is a permutation matrix. Applying the
  permutation $M$ to the list of roots $(w_{1,3,1})_i$ of $P_{1,3,1}$
  leads to a valid co\"ordinate pairing: now ${(M w_{1,3,1})}_i$
  corresponds to the ${( w_{1,2,1})}_i$.
\end{example}

\begin{example}
  There are situations in which the matrix $M$ is not a permutation
  matrix but nevertheless contains useful information. Consider the
  finite algebraic set
  \[S = \left \{ \big(i,\sqrt{2}\big),\big(-i,\sqrt{2}\big),\big(i,-\sqrt{2}\big),\big(-i,-\sqrt{2}\big) \right\}.
  \]
  In this case the minimal polynomials of the co\"ordinates are
  $x^2+1$ and $y^2-2$ and the root compatibility matrix is $M =
  (\begin{smallmatrix}1 & 1 \\ 1 & 1 \end{smallmatrix})$. All $4$
  possible pairings lead to correct solutions.

  There are also situations in which the matrix $M$ differs from the
  permutation matrix giving the correct identification of
  co\"ordinates, even with exact arithmetic. For example, if $\xi$
  denotes a fifth root of unity, and
  \[S=\left\{\big(\xi,\xi^2\big),\big(\xi^2,\xi^4\big),\big(\xi^3,\xi\big),\big(\xi^4,\xi^3\big)\right\}
  \]
  then the co\"ordinates have respectively $x^4+x^3+x^2+x+1$ and
  $y^4+y^3+y^2+y+1$ as their minimal polynomial, and the root
  compatibility matrix is insufficient to recover $S$. Indeed the sum
  of the co\"ordinates $z=x+y$, which has minimal polynomial
  $z^4+2z^3+4z^2+3z+1$, does not distinguish solutions in $S$ from the
  non-solutions
  \[S'=\left\{\big(\xi,\xi^3\big),\big(\xi^2,\xi\big),\big(\xi^3,\xi^4\big),\big(\xi^4,\xi^2\big)\right\}
  \]
  Note that $S$ and $S'$ are distinguished by the equation $x^2-y$
  which holds in $S$ but not in $S'$. A linear form $ax+by$ with
  $0\neq a\neq b\neq0$ would also do.

  There are also situations with more than two variables in which all
  compatibility matrices between two variables lead to possible
  pairings, but their combined information is not enough to identify
  the correct solutions. For example, consider
  \[S = \left \{ \big(\varepsilon_1\sqrt2,\varepsilon_2\sqrt3,\varepsilon_3\sqrt5\big)\colon \varepsilon_i\in\{\pm1\},\varepsilon_1\varepsilon_2\varepsilon_3=1\right\}.
  \]
  All pairings between two co\"ordinates are allowed, but there are
  $4$ solutions in total, and not $8$.
\end{example}

Above we have used the linear forms $x_i + x_j$ to determine the
compatibility. Our algorithm uses random linear forms to avoid these
problems, or at least make them less probable.

We are now ready to explain the algorithm computing the solutions in
number fields that reduce modulo $p^N$ to given approximate solutions
in $\ZZ_p$. That problem is in fact an instance of the following, more
general problem.

The following algorithm is described as a process that, receiving as
input an algebraic system over $\ZZ$ and an infinite feed of
ever-more-precise $p$-adic approximations of a solution, produces a
stream of algebraic solutions (in the form of minimal polynomials over
$\ZZ$ and complex numbers singling out roots of the minimal
polynomials). The stream eventually exhausts all solutions conjugate
to the $p$-adic solution.
\begin{alg}[Solve $0$-dimensional algebraic systems]\label{aMatch}\rule{0mm}{0mm}\\
  \underline{Given:} a polynomial system of equations $\mathcal
  F=(F_1,\dots,F_m)$ in variables $x_1,\dots,x_m$, having a finite
  number of solutions; and approximations, to arbitrary precision, of
  a solution $(\widehat{x_1},\dots,\widehat{x_m})$ in $\ZZ_p$\\
  \underline{Return:} a number field $\KK\subset\CC$, and exact
  solutions $(x_1^i,\dots,x_m^i)$ in $\KK$, for $i=1,\dots,s$; each
  element of $\KK$ is given by its minimal polynomial and an
  approximation in $\CC$ of a particular root.

  We construct the solutions $(x_1^i,\dots,x_m^i)$ iteratively, entry
  by entry, by constructing partial tables
  $\{(x_1^i,\dots,x_k^i)\colon i=1,\dots,s\}$. We start by an empty
  table, with $s=1$ and $k=0$, and let $\epsilon$ be a small number.

  Then, for each $k=1,\dots,m$, we do the following. Using
  Algorithm~\ref{aLLL}, we compute a likely minimal polynomial $P_k$
  of $\widehat{x_k}$, say of degree $e$. We compute, to precision
  better than $\epsilon$, approximate roots $r_1,\dots,r_e\in\CC$ of
  $P_k$. Set
  \[\delta_1=\min\{|r_i-r_j|\colon 1\leq i\neq j\le e\}.\]
  If $\delta_1<\epsilon$, we halve $\epsilon$ and restart all over.

  We next choose randomly a linear form $L = l_1x_1 + \dots + l_k x_k$
  with $l_k\not=0$. Again using Algorithm~\ref{aLLL}, we compute a
  minimal polynomial $P_L$ for
  $L(\widehat{x_1},\dots,\widehat{x_k})$. If the degree of $P_L$ is
  not divisible by $s$, we choose a different linear form $L$ and
  repeat the above. Otherwise, we let $\delta_1$ be the minimal
  distance between roots of $P_L$. If $\delta_1<\epsilon$, we halve
  $\epsilon$ and restart all over.

  We then compute the $s\times e$ matrix $M=(M_{i,j})$ with
  \[M_{i,j}=\begin{cases} 1 & \text{ if }|P_L(L(x_1^i,\dots,x_{k-1}^i,r_j))|<\frac13\min\{\delta_1,\delta_2\}\|L\|_1,\\
    0 & \text{ otherwise}.\end{cases}
  \]
  Let the degree of $P_L$ be $st$. If $M$ contains $t$ ones per row
  and one one per column, then we replace $s$ by $st$ and replace each
  row $(x_1^i,\dots,x_{k-1}^i)$ is the partial table by $t$ rows
  $(x_1^i,\dots,x_{k-1}^i,r_j)$ for all $j$ such that
  $M_{i,j}=1$. Otherwise, we repeat the step with a different linear
  form or, if that failed more than ten times in a row, we simply skip
  the iteration.

  When the iteration finished with $k=m$, we have obtained $s$
  candidate solutions, which we check algebraically by evaluating
  $\mathcal F$ on them. We output all those that are certifiably valid
  solutions, and restart the algorithm with better approximations of
  the $\widehat{x_1},\dots,\widehat{x_m}$.
\end{alg}
\begin{proof}[Proof of validity]
  First, all the solutions returned are valid, since they were checked
  (using exact algebra) by evaluating $\mathcal F$ on them.

  Let now $(x_1,\dots,x_m)$ be a solution that is conjugate to
  $(\widehat{x_1},\dots,\widehat{x_m})$. In particular, the minimal
  polynomials of the $x_i$ and $\widehat{x_i}$ are the same, so they
  will eventually be found by Algorithm~\ref{aLLL}. Similarly, for
  every linear form $L$ with integer co\"efficients,
  $L(x_1,\dots,x_m)$ and $L(\widehat{x_1},\dots,\widehat{x_m})$ also
  have the same minimal polynomial, so it will also be eventually
  found by Algorithm~\ref{aLLL}.
\end{proof}

We apply this algorithm to the same polynomial
equations~\eqref{eq:buff}. The variables $x_i$ are a relabeling of the
$w_{i,j,s}$ from~\eqref{eq:W}.

\section{Computing the monodromy}\label{ss:mono}
In this section, we detail the second part of the algorithm sketched
in~\S\ref{ss:algo}.

We are given an approximation of a degree-$d$ rational map
$f\in\CC(z)$, as well as an approximation of the critical values
$Q\subset\CC$, and the local degrees
$\alpha_{i,1},\dots,\alpha_{i,\ell_i}$ above each critical value
$Q_i$.  We are asked to compute the monodromy of the covering induced
by $f$.

The first step is to compute a triangulation $\mathscr Q$ of
$\Pone(\CC)$ by arcs of circle, and containing $Q$ among its
vertices. A particularly efficient triangulation is the \emph{Delaunay
  triangulation}. This is a decomposition of $\Pone(\CC)$ into
triangles, such that, for any two triangles with a common edge, the
sum of their opposite angles is $>\pi$. Such a triangulation always
exists; is essentially unique; and may be computed e.g.\
using~\cite{renka:stripack}.

For performance reasons, we refine the triangulation by adding
vertices to it: whenever we encounter a triangle whose ratio
``circumradius / shortest side'' is larger than $1000$, we add its
circumcenter to the triangulation. This process converges, and gives a
reasonably good triangulation in that its triangles are not too acute;
see~\cite{shewchuk:delaunay}.

The dual decomposition $\mathscr Q^\perp$ of the sphere is the
associated \emph{Vorono\"i diagram}. It has one vertex, called a
\emph{dual vertex}, per Delaunay triangle and one edge, called
\emph{dual edge}, across every Delaunay edge.  Each of its edges
$\varepsilon$ is parametrised as the preimage, under a M\"obius
transformation $\mu_\varepsilon$, of the arc $[0,1]$. We denote by $W$
the vertex set of $\mathscr Q^\perp$, and choose a basepoint $*\in
W$. For each $w\in W$, we number arbitrarily the elements of the fibre
$f^{-1}(w)$ as $\{w_1,\dots,w_d\}$. Because $W$ is far from $Q$, there
are $d$ preimages of each $w\in W$, and their computation is
numerically stable.

There are now two strategies, which have both been tested and
implemented. The first one is a bit simpler, but the second one
performs better in practice. Both associate a permutation
$\varsigma_\varepsilon$ of $\{1,\dots,d\}$ with each edge
$\varepsilon\in\mathscr Q^\perp$ from $w'\in W$ to $w''\in W$, in such
a way that the the $f$-lift of $\varepsilon$ that starts at $w'_j$
ends at $w''_{\varsigma_\varepsilon(j)}$. Both are explained below;
assuming them, we finish the description of the algorithm.

Let $(\hat\gamma_i)_{i=1,\dots,k}$ be non-crossing (but possibly
overlapping) paths in $\mathscr Q^\perp$ that start and end in $*$,
cyclically ordered around $*$, such that $\hat\gamma_i$ surrounds once
counterclockwise the point $Q_i$ and no other vertex of $Q$. These
paths may be selected as follows: choose first the path $\hat\gamma_1$
arbitrarily, and mark its edges. Then, for $i=2,\dots,k$, choose the
path $\hat\gamma_i$ in such a manner that it does not cross the
previously chosen paths (i.e., it may follow a marked path, but must
depart from it on the same side as it joined it), and starts at $*$ in
counterclockwise order between the paths $\hat\gamma_{i-1}$ and
$\hat\gamma_1$. These paths $\hat\gamma_i$ are of the following form:
follow some edges; then follow counterclockwise the perimeter of the
cell of $\mathscr Q^\perp$ containing $Q_i$, i.e.\ in counterclockwise
order the perpendiculars of the edges of $\mathscr Q$ touching $Q_i$;
and then follow in reverse the first edges.

These paths form a basis for the fundamental group
$\pi_1(\Pone(\CC)\setminus Q,*)$, compatible with the description
from~\S\ref{ss:hurwitz}. Let
$(\varepsilon_{i,1},\dots,\varepsilon_{i,n_i})$ be the edges along
$\hat\gamma_i$, and compute the permutation
$\sigma_i=\varsigma_{i,1}\cdots\varsigma_{i,n_i}$. Then the monodromy
representation of $f$ is given by the family
$(\sigma_i)_{i=1,\dots,k}$.

\subsection{Connect-the-dots}
For each dual edge $\varepsilon\in\mathscr Q^\perp$, going from $w'$
to $w''$, we do the following. Knowing the spherical distance from
$w'$ to $w''$ and using coarse estimates on $|f'(z)|$, we have an
upper bound on the length of each of the $d$ preimages of
$\varepsilon$. We attempt to match each $w'_j$ with a
$w''_{\varsigma(j)}$ for some permutation $\varsigma\in\sym d$, by
matching each $w'_j$ to the closest $w''_{\varsigma(j)}$. If more than
one match is compatible with the upper bound on the length of an arc
above the arc from $w'$ to $w''$, we subdivide the edge $\varepsilon$.

\begin{alg}[Lifting edges by connect-the-dots]\rule{0mm}{0mm}\\
  \underline{Given:} a rational map $f\in\C(z)$, an edge
  $\varepsilon=\mu_\varepsilon^{-1}[0,1]$ from
  $w'=\mu_\varepsilon^{-1}(0)$ to $w''=\varepsilon^{-1}(1)$ and
  orderings $w'_1,\dots,w'_d$ and $w''_1,\dots,w''_d$ of the preimages
  of $w',w''$ respectively\\
  \underline{Return:} a permutation $\varsigma\in\sym d$ such that the
  $f$-lift of $\varepsilon$ starting at $w'_j$ ends at
  $w''_{\varsigma(j)}$

  More generally, the algorithm computes, for arbitrary $0\le s<t\le
  1$, the matching between $f$-preimages of $\mu_\varepsilon^{-1}(s)$
  and of $\mu_\varepsilon^{-1}(t)$; the solution is provided by the
  matching for $s=0$ and $t=1$. We write $\{w_j(t)\}_{j=1,\dots,d}$
  for the $f$-preimages of $\mu_\varepsilon^{-1}(t)$.

  If there is only one possible match between the sets $\{w_j(s)\}$
  and $\{w_j(t)\}$, given by a permutation $\varsigma$, then the
  algorithm returns that permutation. Otherwise, set $u=(s+t)/2$,
  compute the preimages $\{w_j(u)\}_{j=1,\dots,d}$ of
  $\mu_\varepsilon^{-1}(u)$, recursively compute the matching between
  the $w_j(s)$ and $w_j(u)$ and between the $w_j(u)$ and $w_j(t)$, and
  return the product of the corresponding permutations.
\end{alg}

\subsection{Using two triangulations}
The second algorithm is more efficient, and uses fundamentally the
fact that the arcs in the triangulation $\mathscr Q^\perp$ and in its
$f$-preimage are given by algebraic curves.

We initially compute the Delaunay triangulation $\mathscr C$ on
$f^{-1}(Q)$, and parametrise its edges $e$ via M\"obius
transformations $\nu_e$ such that $e=\nu_e^{-1}([0,1])$.

It is straightforward to lift $\mathscr Q^\perp$ through $f$: its
edges are all the curves defined by equations $(\mu_\varepsilon\circ
f)(z)\in[0,1]$.

\begin{alg}[Lifting edges using two triangulations]\rule{0mm}{0mm}\\
  \underline{Given:} a rational map $f\in\C(z)$, an edge
  $\varepsilon=\mu_\varepsilon^{-1}[0,1]$ from
  $w'=\mu_\varepsilon^{-1}(0)$ to $w''=\varepsilon^{-1}(1)$ and
  orderings $w'_1,\dots,w'_d$ and $w''_1,\dots,w''_d$ of the preimages
  of $w',w''$ respectively\\
  \underline{Return:} a permutation $\varsigma\in\sym d$ such that the
  $f$-lift of $\varepsilon$ starting at $w'_j$ ends at
  $w''_{\varsigma(j)}$

  For each $i=1,\dots,d$, we seek the $j\in\{1,\dots,d\}$ such that
  the lift of $\varepsilon$ starting at $w'_i$ ends at $w''_j$. The
  permutation to return is then the map $i\mapsto j$.

  We first determine in which triangle $T$ of $\mathscr C$ the lift
  $w'_i$ lies. Then we compute whether the lift $\tilde\varepsilon$ of
  $\varepsilon$ starting at $w'_i$ leaves $T$. If this happens, then
  it must cross an edge $e$ of $T$, namely, we have
  $\mu_\varepsilon\circ f\circ \nu_e^{-1}([0,1])\in[0,1]$. This
  entails, firstly, that the imaginary part of $\mu_\varepsilon\circ
  f\circ \mu_e^{-1}$ vanishes, and secondly that its real part belongs
  to $[0,1]$. Both are polynomial conditions imposed on real-valued
  polynomials, and are efficiently computable numerically. We also
  keep track of the point of intersection $\tilde w_e$ of
  $\tilde\varepsilon$ and $e$.

  In that case, we move to the neighbouring triangle $T'$ of $T$ along
  edge $e$, and continue. When we do not detect more intersections
  with edges of $\mathscr C$, we know in which triangle of $\mathscr
  C$ the vertex $w''_j$ lies.

  It may happen that two or more vertices $w''_j$ belong to the same
  triangle $T$ that we have found in the previous paragraph. In that
  case, we let $\tilde w$ denote the last point on $\tilde\varepsilon$
  that was computed --- possibly $w'_i$; it also belongs to $T$. We
  consider in turn all candidates $w''_j$, and compute the straight
  path $\delta_j$ from $\tilde w$ to $w''_j$ and its image
  $f(\delta_j)$. If there exists a unique $j$ such that $f(\delta_j)$
  lies in the two triangles of $\mathscr Q$ to which $\varepsilon$
  belongs, then we have found the desired $j$. If there are no such
  $j$, then we interpolate. Let $t_0\in[0,1]$ be such that $f(\tilde
  w)=\varepsilon(t_0)$. Consider $t\in(t_0,1)$, and those lifts $w\in
  f^{-1}(\varepsilon(t))$ that belong to $T$; we then consider the
  paths $\delta_j$ from $w$ to $w''_j$ as before, and continue with
  increasing $t$.
\end{alg}

Additional care must be taken for tangent crossings of edges (when the
imaginary part of $\mu_\varepsilon\circ f\circ \mu_e^{-1}$ has a
multiple zero), and when vertices of $\mathscr C$ lie on edges of
$\mathscr Q^\perp$ or conversely; these are treated as special
cases. However, because of the necessary crudeness of norm estimates
on $|f'(z)|$ in the first method, this second method is preferable.

\section{An application to dynamical systems}\label{sec:cui}
We recall from the introduction that the \emph{post-critical set} of a
branched self-covering $f:\sphere\to \sphere$ with critical value set
$Q_f$ is
\[P_f:=\bigcup_{n\ge0} f^{\circ n}(Q_f).
\]
We are interested in the case where $P_f$ is finite and we consider
$f$ up to isotopy rel $P_f$; namely, $f\sim g$ if there exists a path
of branched self-coverings from $f$ to $g$ whose post-critical set
moves smoothly.  We say that $f$ is \emph{combinatorially
  equivalent}\footnote{This is sometimes called \emph{Thurston
    equivalence}} to a rational map $F$ if there are
orientation-preserving homeomorphisms $\phi:\sphere\to\Pone(\CC)$ and
$\psi:\sphere \to \Pone(\CC)$ such that $F\circ \phi = \psi\circ f$
and $\phi$ is isotopic to $\psi$ rel $P_f$.

On the one hand, many examples of branched self-coverings can be
constructed combinatorially, via triangulations; for these, it is
natural to consider the maps up to isotopy rel the vertices of the
triangulation. On the other hand, a fundamental theorem by Thurston
points to the rigidity of these objects:
\begin{thm}[Thurston; see~\cite{douady-h:thurston}]\label{thm:thurston}
  Let $f:\sphere\to \sphere$ be branched self-covering with finite
  post-critical set $P_f$. For each $p\in P_f$, set
  $o_p=\operatorname{ppcm}\{\deg_q(f^n)\colon q\in f^{-n}(p),
  n>0\}\in\N\cup\{\infty\}$, and assume that $\sum_{p\in
    P_f}(1-1/o_p)>2$. This condition is usually abbreviated into ``$f$
  has hyperbolic orbispace''.

  Then $f$ is combinatorially equivalent to a rational map if and only
  if $f$ admits no ``Thurston obstruction'', namely, if and only if,
  for every collection $\mathcal C$ of isotopy classes of
  non-peripheral disjoint curves on $\sphere\setminus P_f$, the
  $\QQ\mathcal C$-endomorphism
  \[\mathcal C\ni c\mapsto\sum_{d\in
    f^{-1}(c)\cap\mathcal C}\frac{1}{\deg(f:d\to c)}\cdot d\in \QQ\mathcal C\] 
  has spectral radius $<1$.

  Furthermore, in that case, the rational map is unique up to
  conjugation by a M\"obius transformation.
\end{thm}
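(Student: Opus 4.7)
The plan is to recast the problem into Teichmüller theory, following Thurston's original approach. Let $\mathcal T = \mathcal T(\sphere, P_f)$ denote the Teichmüller space of complex structures on $\sphere$ marked by $P_f$, and define a pullback operator $\sigma_f : \mathcal T \to \mathcal T$ as follows: given a representative $[\tau]\in \mathcal T$, pull back its complex structure through $f$; since $f^{-1}(P_f)\supseteq P_f$, one obtains a complex structure on the sphere marked by $P_f$, namely $\sigma_f([\tau])$. The fundamental observation is that $[\tau]$ is a fixed point of $\sigma_f$ if and only if $\tau \circ f \circ \tau^{-1}$ is holomorphic, that is, $f$ is combinatorially equivalent to a rational map via $\tau$. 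Hence the theorem reduces to understanding the dynamics of $\sigma_f$.

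The key calculation is that the codifferential of $\sigma_f$ on the cotangent space of integrable quadratic differentials is given by a pushforward operator of $L^1$-norm at most $1$, strictly less than $1$ on each individual vector (the Royden--Gardiner bound, valid because $f$ has degree $\ge 2$). Hence $\sigma_f$ is weakly contracting for the Teichmüller metric. Iterating $\sigma_f$ from a chosen basepoint $\tau_0$ produces a sequence $\sigma_f^n(\tau_0)$ whose behaviour splits into two cases. If this sequence stays in a compact set, weak contraction forces convergence to a fixed point, giving the desired rational realization. Otherwise, the sequence escapes to infinity in $\mathcal T$, and by the thick--thin decomposition there must exist simple closed curves on $\sphere \setminus P_f$ whose hyperbolic lengths along the iterates tend to zero. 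A compactness and combinatorial argument then upgrades these into a finite multicurve $\mathcal C$ whose $f$-preimage is isotopic to a subset of $\mathcal C$ (up to peripheral curves), and the induced linear endomorphism on $\QQ \mathcal C$ has spectral radius $\ge 1$: this is a Thurston obstruction. The hyperbolic orbispace hypothesis is precisely what is needed to discard the degenerate alternative where divergence does not produce such a curve (the Lattès / integral torus-endomorphism phenomenon).

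For the easy converse, if $f$ is realised by a rational map $F$ and $\mathcal C$ is any invariant multicurve, a direct Euler-characteristic / hyperbolic-area comparison on preimages of annular neighbourhoods shows that the associated linear map $c \mapsto \sum_{d} (1/\deg(f:d\to c))\, d$ has spectral radius $<1$, so no obstruction can exist. Uniqueness of the rational realization up to Möbius conjugation follows from the fact that, under the hyperbolic orbispace hypothesis, $\sigma_f$ has at most one fixed point: two distinct fixed points would yield an invariant Teichmüller geodesic on which $\sigma_f$ acts as a weak contraction with no strict decrease in length, contradicting the strict inequality in the Royden--Gardiner bound. The main obstacle in this plan is the extraction step: producing an $f$-invariant multicurve from a divergent iterate sequence requires delicate bookkeeping of how hyperbolic geodesics degenerate and how their $f$-preimages distribute, and this is the technical heart of the Douady--Hubbard proof.
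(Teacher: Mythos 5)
This statement is not proved in the paper at all: it is imported verbatim as Thurston's theorem with a citation to Douady--Hubbard, so there is no in-paper argument to compare yours against. Your outline is indeed the standard Douady--Hubbard strategy (the pullback map $\sigma_f$ on the Teichm\"uller space of $(\sphere,P_f)$, fixed points corresponding to rational realizations, contraction via the pushforward operator on integrable quadratic differentials, and extraction of an invariant multicurve with leading eigenvalue $\ge 1$ when the orbit diverges), so as a plan it is the right one.

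As a proof, however, it has genuine gaps beyond the one you flag yourself. First, the contraction statement is misquoted: the pushforward $f_*$ on integrable quadratic differentials satisfies $\|f_*q\|\le\|q\|$, but strict inequality on every nonzero $q$ does \emph{not} follow merely from $\deg f\ge 2$; equality occurs exactly when $f^*(f_*q)=\deg(f)\,q$, which does happen (flexible Latt\`es maps), and it is the hyperbolic-orbifold hypothesis that is used to rule this out or, more precisely, to show that a suitable iterate of $\sigma_f$ is uniformly contracting on the relevant sets. Second, ``weak contraction plus a precompact orbit forces convergence to a fixed point'' is not true as stated for a map that is merely $1$-Lipschitz-with-strict-decrease-on-each-pair; one needs the uniform bound on the coderivative norm along orbit segments returning to a compact set (this is where Douady--Hubbard quantify the contraction), and the same uniformity is what powers the uniqueness argument, which your geodesic sketch also leaves resting on the unproved ``strict Royden--Gardiner bound.'' Third, the step you call the technical heart --- converting divergence in Teichm\"uller space (short geodesics appearing) into an $f$-stable multicurve whose transition matrix has spectral radius $\ge 1$ --- is exactly the content of the theorem's hard direction and is not sketched beyond naming it. So the proposal is a faithful road map of the known proof, but it is not a self-contained argument; to be complete it would need the norm/contraction lemma in its correct form with the orbifold hypothesis doing its actual work, and the full degeneration-to-obstruction analysis.
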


\subsection{Cui's Problem}
Cui Guizhen suggested in 2010 that if $f$ is a ``Sierpi\'nski map'',
namely a rational map whose Julia set is a Sierpi\'nski carpet, then
there should exist an essential, non-peripheral, simple curve $\gamma$
such that $f^{-n}(\gamma)$ contains at least two components homotopic
to $\gamma$ rel $P_f$, for some $n$ large enough.  He then found a
counterexample to his suggestion, given combinatorially as follows:

\begin{center}
\begin{tikzpicture}

\begin{scope}
\filldraw[fill=green!25, draw=black] (30:0.7) -- (150:0.7) -- (270:0.7) -- cycle;
\draw circle [radius=3];
\foreach\a/\name in {90/\infty,210/0,330/1} {
  \coordinate (A) at (\a:1.2);
  \coordinate (B) at (\a+60:0.7);
  \filldraw[fill=green!25, draw=black] (\a-60:0.7) -- (A) -- (\a-7:1.6) -- cycle;
  \filldraw[fill=green!25, draw=black] (B) -- (A) -- (\a+7:1.6) -- cycle;
  \filldraw[fill=green!25, draw=black] (\a-7:1.6) -- (\a+7:1.6) -- (\a:2.9) -- cycle;
  \filldraw[fill=green!25, draw=black] (\a:2.9) .. controls (\a+10:2.2) and (\a+50:0.9) .. (B) .. controls (\a+70:0.9) and (\a+110:2.2) .. (\a+120:2.9) -- cycle;
  \draw (\a:0.9) node {$\name$};
}

\draw (0,0) node {\tiny $1$};
\draw (0.8,-0.75) node {\tiny $2$};
\draw (1.12,-0.42) node {\tiny $3$};
\draw (0.3,1) node {\tiny $4$};
\draw (-0.3,1) node {\tiny $5$};
\draw (-1.1,-0.42) node {\tiny $6$};
\draw (-0.8,-0.75) node {\tiny $7$};
\draw (1.75,-1.0) node {\tiny $8$};
\draw (0.0,2) node {\tiny $9$};
\draw (-1.75,-1.0) node {\tiny $10$};
\draw (0,-1.2) node {\tiny $11$};
\draw (0.9,0.8) node {\tiny $12$};
\draw (-0.9,0.8) node {\tiny $13$};
\end{scope}

\begin{scope} [xshift=7cm]
\filldraw[fill=green!25, draw=black] (90:1.5) -- (210:1.5) -- (330:1.5) -- cycle;
\draw circle [radius=2];
\foreach\a/\name in {90/\infty,210/0,330/1} \draw (\a:1.7) node {$\name$};      
\end{scope}

\draw[->,thick,red] (3,1) -- node[above] {$f$ = fold} (5,1);
\draw[->,thick,red] (3,-1) -- node[below] {$i$ = imbed} (5,-1);
\end{tikzpicture}
\end{center}

In that case, $P_f=Q:=\{Q_1,Q_2,Q_3\}$ with $Q_1:=\infty$, $Q_2:=0$
and $Q_3:=1$, and $f$ fixes $P_f$ pointwise. Since $f$ has only three
post-critical points, all curves are peripheral. On the other hand,
the Julia set of $f$ is a Sierpi\'nski carpet, as we now show. For all
$i\in\{1,2,3\}$, let $\mathcal U_i$ be the immediate basin of
$Q_i$. Given $i,j\in\{1,2,3\}$ not necessarily distinct, there exists
up to isotopy a unique properly embedded arc with endpoints at
$Q_i,Q_j$ whose interior avoids $P_f$. Direct inspection of the
triangulation shows that none of these arcs are invariant under $f$ up
to isotopy.  From this it follows that the closures of the $\mathcal
U_i$ are pairwise disjoint, and that the boundary of each $\mathcal
U_i$ is a Jordan domain.  Consider next the preimages of the basins
$\mathcal U_i$. Using the fact that $f$ is hyperbolic, no branching
occurs on their boundaries, so all iterated $f$-preimages of the
$\mathcal U_i$ have disjoint closures and the Julia set is a
Sierpi\'nski carpet as claimed.

Kevin Pilgrim indicated to us a degree-$3$ rational map exhibiting the
same phenomenon (its Julia set is a Sierpi\'nski carpet, and it
contains no self-replicating multicurve): start by the degree-$2$
rational map coming from the Torus endomorphism $z\mapsto(1+i)z$ on
$\C/\Z+i\Z$ via the Weierstrass map $\wp$. Then blow up the edge
between $\wp(0)$ and the fixed point.

From the above picture, it is easy to compute the monodromy action about $Q$: 
the permutations are those given in~\eqref{eq:perm}, namely
\begin{alignat*}{2}
  &\sigma_1&=(1,7,11,2)(3,8)\underline{(4,5)}(6,10)(9,12,13),\\
  &\sigma_2 &=(1,3,12,4)(5,9)\underline{(6,7)}(10,13,11)(2,8),\\
  &\sigma_3 &=(1,5,13,6)(7,10)\underline{(2,3)}(8,11,12)(4,9).
\end{alignat*}
Furthermore, the \underline{underlined} cycles mark which preimage of
a critical value should be fixed. This extra dynamical data is
required to determine the combinatorial equivalence class of $f$, and
it is also sufficient since $\#P_f=3$ so the pure mapping class group
of $(\Pone(\CC),\#P_f)$ is trivial. The orbispace of $f$ is hyperbolic,
because $o_p=\infty$ for each $p\in P_f$.  Because $\#P_f=3$, all
curves on $\Pone(\CC)\setminus P_f$ are peripheral, so no Thurston
obstruction may occur. By Theorem~\ref{thm:thurston}, there is then,
up to M\"obius conjugacy, a unique map with monodromy
$(\sigma_1,\sigma_2,\sigma_3)$, fixing $\infty,0,1$ with local degree
$2$, and such that $\infty,0,1$ are critical points marked by the
cycles $(4,5),(6,7),(2,3)$ respectively.

Therefore, the map computed by our algorithm, after precomposition
with a suitable M\"obius transformation that puts the preimages of $Q$
at the points determined by the cycles $(4,5),(6,7),(2,3)$
respectively, is the required solution.

Our algorithm searched in fact for a map $F$ with $\infty,0,1$ of
order $4$. This is an improvement to searching immediately for the
correct map, because there are three points of order $2$ above each of
$\infty,0,1$, and they may lie in a strict field extension.

It remains to determine the appropriate M\"obius transformation with
which to precompose $F$.  The first author developed an algorithm that
determines, from a rational map given by its co\"efficients, both the
monodromy about the critical values and the identification of critical
points with cycles of the monodromy permutations. This algorithm is
part of the software package \textsc{Img} within the computer algebra
system \textsc{Gap}~\cite{gap4:manual}, and will be described
elsewhere. Note, however, that there are finitely many possibilities
to consider for the sought M\"obius transformation, and the correct
one can be found by inspection.  To find the appropriate one and thus
determine the solution to Cui's problem, it suffices to draw the
preimage of the upper hemisphere under $F$, and to identify on the
picture the appropriate preimages of $\infty,0,1$. In the image below,
$F$ was normalized so that the order-$4$ critical points above
$\infty,0,1$ are at cube roots of unity $1,\omega,\omega^2$
respectively. The appropriate preimages of $\infty,0,1$ are marked by
a small red circle, based on the figure above:

\begin{center}
  \begin{tikzpicture}
    \draw (0,0) node {\includegraphics[width=10cm]{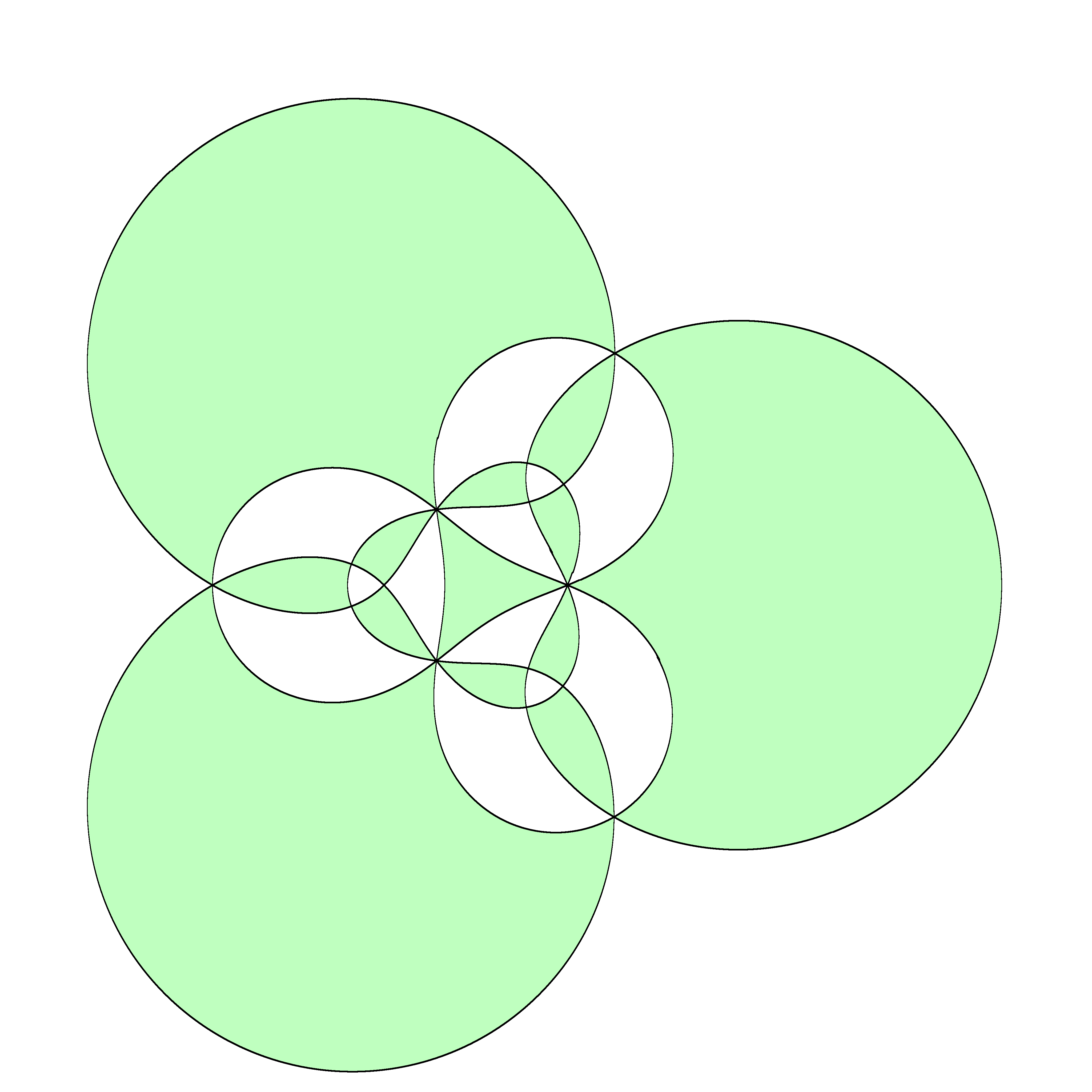}};
    \filldraw[color=red] (-0.16,0.41) circle [radius=2pt];
    \filldraw[color=red] (-0.16,-1.12) circle [radius=2pt];
    \filldraw[color=red] (-1.48,-0.36) circle [radius=2pt];
  \end{tikzpicture}
\end{center}

Our algorithm found a solution $(\bmod\;11)$ of the defining equations
for a map; then lifted them $(\bmod\;11^{2^6})$ and finally obtained
six Galois conjugate solutions. The correct one (with correct choice
of point of order $2$ above $\infty,0,1$) was then found. The original
map is of the form
\[f(z/w)=\lambda\frac{(z-b_4w)^4(z-b_3w)^3(z-b_{2,1}w)^2(z-b_{2,2}w)^2(z-b_{2,3}w)^2}{(z-a_4w)^4(z-a_3w)^3(z-a_{2,1}w)^2(z-a_{2,2}w)^2(z-a_{2,3}w)^2};\]
here are the preimages $a_i,b_i,c_i$ of $\infty,0,1$ respectively:
\begin{align*}
  a_4&=\infty\text{ (meaning the term $z-a_4w$ should be replaced by $1$)},\\
  a_3 &\approx 0.500000000000000000000000000000-0.439846359796987134487167714627i,\\
  a_{2,1} &\approx 1.61268567872451072013417667720-0.490182463946729812334860743821i,\\
  a_{2,2} &\approx 0.500000000000000000000000000000-0.0415300696430258467988035191529i,\\
  a_{2,3} &\approx -0.612685678724510720134176677204-0.490182463946729812334860743821i,\\
  b_4&=0,\\
  b_3 &\approx 1.12748515145901194873474709466-0.991840479188802206853242764751i,\\
  b_{2,1} &\approx 1.98629656633071582984701575517-0.164982069462835473582606346591i,\\
  b_{2,2} &\approx 0.567640411622375679553529964298-0.172536644477962176299255320022i,\\
  b_{2,3} &\approx -0.995164705141609432502666361446-0.796186860797306011242450678339i,\\
  c_4&=1,\\
  c_3 &\approx -0.127485151459011948734747094655-0.991840479188802206853242764751i,\\
  c_{2,1} &\approx 0.432359588377624320446470035702-0.172536644477962176299255320022i,\\
  c_{2,2} &\approx -0.986296566330715829847015755165-0.164982069462835473582606346591i,\\
  c_{2,3} &\approx 1.99516470514160943250266636145-0.796186860797306011242450678339i,\\
  \lambda &\approx 0.130027094895701439414281708196i.
\end{align*}
The required M\"obius transformation $\mu$ maps $(\infty,0,1)$ to $(a_{2,2},b_{2,3},c_{2,2})$
respectively, so
\[\mu(z)=\frac{a_{2,2}(c_{2,2}-b_{2,3})z+b_{2,3}(a_{2,2}-c_{2,2})}{(c_{2,2}-b_{2,3})z+(a_{2,2}-c_{2,2})}
\]

and the required solution is $f\circ\mu$.  Its Julia set is displayed
in Figure~\ref{fig:julia}.

\begin{figure}[ht]
  \centerline{\scalebox{0.3}{\includegraphics[viewport=0 180 750 842]{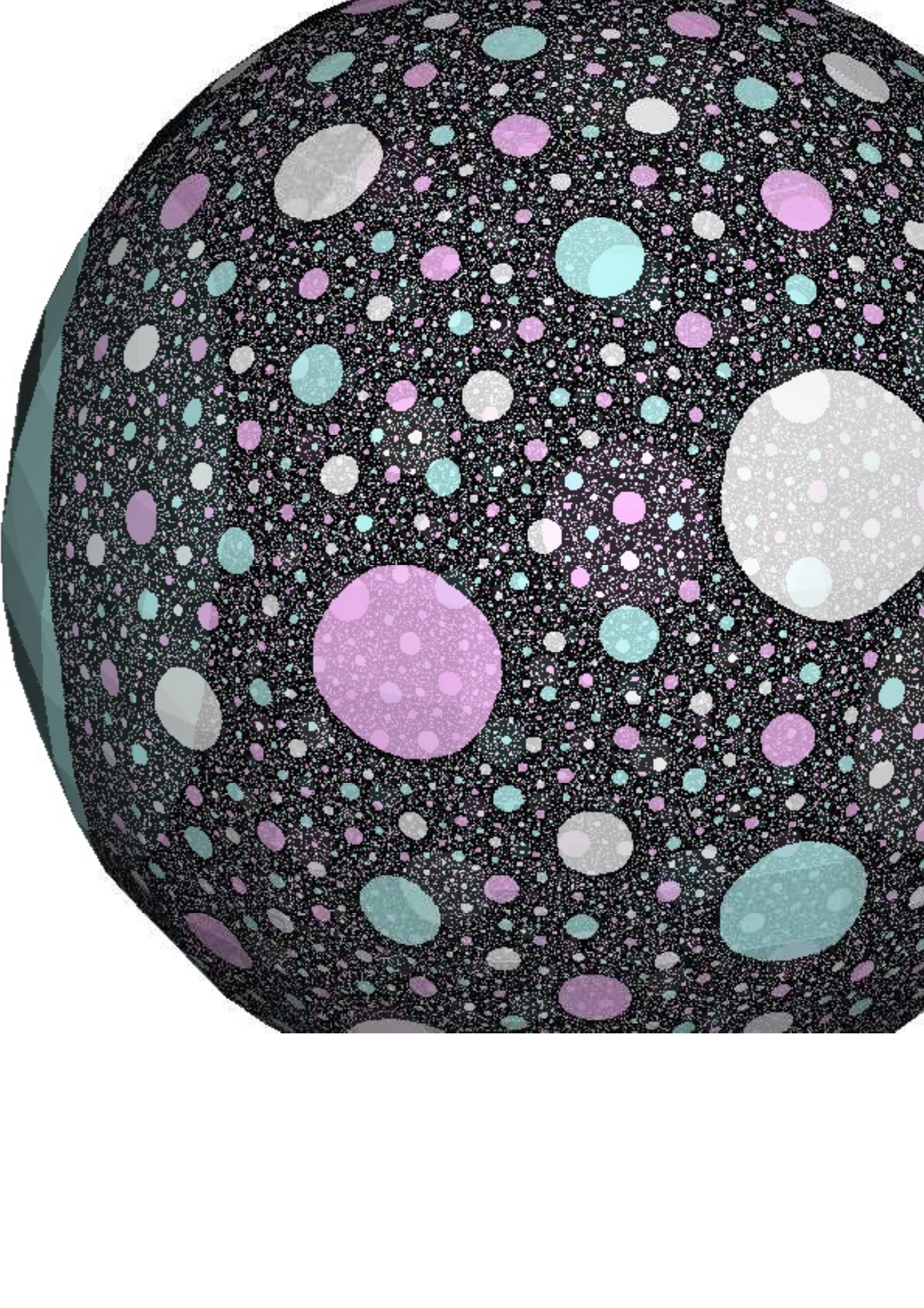}}}
  \caption{Julia set of Cui's map}
  \label{fig:julia}
\end{figure}

\subsection*{Acknowledgments}
We are grateful to Kevin Pilgrim for valuable remarks on a preliminary
version of the text, and for more examples of Sierpi\'nski maps with
three post-critical points. The referee pointed out with great acuity
some deficiencies in the exposition, and contributed a shorter proof
of the assertion that the Julia set of Cui's map is a Sierpi\'nski
carpet.

\begin{bibsection}
\begin{biblist}
\bibselect{math}
\end{biblist}
\end{bibsection}

\end{document}